\title{Remarks on the space of volume preserving embeddings}
\author{Mathieu Molitor\\
\it \small{Department of Mathematics, Keio University}\\
\it \small{3-14-1, Hiyoshi, Kohoku-ku, 223-8522, Yokohama, Japan}\\ 
\small{\it{e-mail:}}\,\,\url{pergame.mathieu@gmail.com}
}
\date{}
\begin{document}

\newtheorem{lemme}{Lemma}[section]
\newtheorem{definition}[lemme]{Definition}
\newtheorem{proposition}[lemme]{Proposition}
\newtheorem{corollaire}[lemme]{Corollary}
\newtheorem{theoreme}[lemme]{Theorem}
\newtheorem{remarque}[lemme]{Remark}
\newtheorem{example}[lemme]{Example}
\bibliographystyle{alpha}

\maketitle

\begin{abstract}
	Let $(N,g)$ be a Riemannian manifold. For a compact, connected and oriented submanifold $M$ of $N\,,$ 
	we define the space of volume preserving embeddings $\text{Emb}_{\mu}(M,N)$ as the set of smooth
	embeddings $f\,:\,M\hookrightarrow N$ such that $f^{*}\mu^{f}=\mu\,,$ where $\mu^{f}$ (resp. $\mu$)
	is the Riemannian volume form on $f(M)$ (resp. $M$) induced by the ambient metric $g$ 
	(the orientation on $f(M)$ being induced by $f$). \\
	In this article, we use the Nash-Moser inverse function Theorem to show that the set of volume preserving embeddings in 
	$\textup{Emb}_{\mu}(M,N)$ whose 
	mean curvature is nowhere vanishing forms a tame Fr\'echet manifold,
	and determine explicitly the Euler-Lagrange equations of a natural class of Lagrangians.
	
	As an application, we generalize the Euler equations of an incompressible fluid to the case of an ``incompressible membrane"
	of arbitrary dimension moving in $N\,.$
\end{abstract}
\section*{Introduction}
	Fluid mechanics and infinite dimensional geometry already share a long and common history. In 1966, Arnold \cite{Arnold} 
	suggested to regard the space of velocity fields of an incompressible fluid as the Lie algebra of the infinite dimensional 
	Lie group of volume preserving diffeomorphisms :
	\begin{eqnarray}
		\textup{SDiff}_{\mu}(M):=\big\{\phi\in \textup{Diff}(M)\,\vert\,\phi^{*}\mu=\mu\big\}\,.
	\end{eqnarray}
	Here $M$ is the oriented manifold on which the fluid is living, $\mu$ is the volume form of $M$ and 
	$\textup{Diff}(M)$ is the group of all smooth diffeomorphisms of $M\,.$ In this setting, Arnold interpreted 
	the Euler equations of an incompressible fluid as a geodesic equation on $\textup{SDiff}_{\mu}(M)$ 
	for an appropriate right-invariant metric. 
	
	It was not until the 70's that Arnold's vision of fluid mechanics could be made partially rigorous with the development of 
	Banach and Hilbert manifolds. In \cite{Ebin-Marsden}, Ebin and Marsden considered volume preserving diffeomorphisms on a compact 
	manifold $M$ 
	which are not smooth, but of Sobolev classes. In doing so, they obtained topological groups locally modelled on Hilbert spaces, 
	and were able, following Arnold's ideas, to prove analytical results on the Euler equations. Their method is still 
	an active research area (see for example \cite{Gay-well,Gay-Ratiu}). 
	
	On the geometrical side, volume preserving diffeomorphisms which are not smooth are problematic. For, the 
	left-multiplication $L_{\phi}\,:\,\textup{Diff}(M)\rightarrow \textup{Diff}(M)\,,\,\,\psi\mapsto \phi\circ\psi$ 
	consumes derivatives of $\phi\,,$ and thus, subgroups of the group of diffeomorphisms whose elements are \textit{not} 
	smooth cannot be turned into genuine infinite dimensional Lie groups (left multiplication is not smooth). Hence, 
	from a Lie group theory point of view, one has to consider the group of \textit{smooth} volume preserving 
	diffeomorphisms of $(M,\mu)\,,$ i.e., the group $\textup{SDiff}_{\mu}(M)\,.$

	For technical reasons, $\textup{SDiff}_{\mu}(M)$ can only be given a Lie group structure modelled on topological vector spaces 
	which are more general than Banach and Hilbert spaces, and an inverse function
	theorem, applicable beyond the usual Banach space category, is necessary. To our knowledge, only two authors succeeded in doing this.
	The first was Omori who showed and used an inverse function theorem in terms of 
	ILB-spaces (``inverse limit of Banach spaces", see \cite{Omori}), and later on, Hamilton with 
	his category of tame Fr\'echet spaces together with the Nash-Moser inverse function 
	Theorem (see \cite{Hamilton}). Nowadays, it is nevertheless not uncommon 
	to find  mistakes or big gaps in the literature when it comes to the differentiable 
	structure of $\textup{SDiff}_{\mu}(M)\,,$ even in some specialized textbooks in infinite 
	dimensional geometry. The case of $M$ being non-compact is even worse, and no proof that $\textup{SDiff}_{\mu}(M)$ 
	is a ``Lie group" is available in this case. 

	A natural generalization of $\textup{SDiff}_{\mu}(M)\,,$ with which we shall be concerned in this paper,
	is the space of \textit{volume preserving embeddings} 
	$\textup{Emb}_{\mu}(M,N)\,.$ This space is defined as follows. For a Riemannian manifold $(N,g)$ and a compact, connected and 
	oriented submanifold $M$ of $N\,,$
	\begin{eqnarray}	
		\textup{Emb}_{\mu}(M,N):=\Big\{f\in\textup{Emb}(M,N)\,\Big\vert\,f^{*}\mu^{f}=\mu\Big\}\,,
	\end{eqnarray} 
	where $\textup{Emb}(M,N)$ is the space of smooth embeddings from $M$ into $N\,,$ 
	and where $\mu^{f}$ (resp. $\mu$) is the Riemannian volume form on $f(M)$ (resp. $M$) induced by the 
	ambient metric $g$ (the orientation on $f(M)$ being induced by $f$). 
	
	When $M$ is an open subset of $\mathbb{R}^{n}$ with boundary\footnote{In this paper, all manifolds have no boundary.}, 
	then it is possible to extend Arnold's method by introducing a $L^{2}$-metric on $\textup{Emb}_{\mu}(M,N)$ 
	and to show that the corresponding geodesics describe 
	the dynamics of a liquid drop with free boundary. This has been discussed formally in \cite{Lewis}, 
	and rigorous results in this direction 
	can be obtained using spaces of volume preserving embeddings of Sobolev classes, 
	as pointed out to us by Sergiy Vasylkevych\footnote{Private communication.}.

	In this paper, we focus on \textit{smooth} volume preserving embeddings, i.e., on the space $\textup{Emb}_{\mu}(M,N)$ as defined 
	above. To this end, we adopt a rigorous infinite dimensional point of view based on
	Hamilton's category of tame Fr\'{e}chet manifolds, and determine explicitly a natural class of Lagrangian equations 
	on $\textup{Emb}_{\mu}(M,N)$. We allow $M$ to be of arbitrary dimension, and we assume that it has no boundary.

	More precisely, using the techniques developed by Hamilton in \cite{Hamilton}, as well as a generalization of the Helmholtz-Hodge 
	decomposition Theorem for vector fields supported 
	on submanifolds (Proposition \ref{decomposition de Hel-Hod}), we are able, in Theorem \ref{Emn variete tame}, to show the following result: 
	\textit{the space $\textup{Emb}_{\mu}(M,N)^{\times}$ of volume preserving embeddings 
	whose mean curvature is nowhere vanishing forms a tame Fr\'{e}chet submanifold of $\textup{Emb}(M,N)\,.$} This result is a consequence 
	of the Nash-Moser inverse function Theorem. 
	
	Having a manifold structure on $\textup{Emb}_{\mu}(M,N)^{\times}\,,$ we then consider Lagrangian mechanics on it. The Lagrangians we consider 
	are of the following form:
	\begin{eqnarray}
		 \widetilde{L}(X_{f}):=\int_{M}\,L\circ X_{f}\cdot \mu\,,
	\end{eqnarray}
	where $L\,:\,TN\rightarrow \mathbb{R}$ is a Lagrangian density and where 
	$X_{f}\,:\,M\rightarrow TN$ is a ``divergence free vector field along $f$", regarded as an element of 
	$T_{f}\textup{Emb}_{\mu}(M,N)^{\times}\,.$ As it turns out, the resulting 
	Euler-Lagrange equations are (pointwise) the usual finite dimensional Euler-Lagrange equations (written in a covariant form),
	twisted by a ``Helmholtz-Hodge projection" (Proposition \ref{theoreme ultime??? lol}). 

	When $L$ is the energy associated to the metric $g\,,$ then the corresponding Euler-Lagrange equations on $\textup{Emb}_{\mu}(M,N)^{\times}$ 
	are geodesic equations which generalize the Euler equations of an incompressible fluid to the case of an ``incompressible 
	membrane" of arbitrary dimension moving in $N$ (Proposition \ref{je derpime pas de fric}). 

	It would be interesting to know if these equations have a physical meaning.\\

	The paper is organized as follows. In \S\ref{subsection the category of hamitlon}, we review very briefly 
	Hamilton's category of tame Fr\'{e}chet manifolds. In \S\ref{sss madco encore...}, we show that 
	$\textup{Emb}_{\mu}(M,N)^{\times}$ is a tame Fr\'{e}chet submanifold of $\textup{Emb}(M,N)\,;$ this requires a generalization 
	of the Helmholtz-Hodge decomposition. In \S\ref{sss pas inspireeee}, we compute the Euler Lagrange equations on 
	$\textup{Emb}_{\mu}(M,N)^{\times}$ for a natural class of Lagrangians, and in \S\ref{eeeeffffgggfggrrrr}, we identify 
	the natural generalization of the Euler equations of an incompressible fluid.

\section{The differentiable structure of the space of volume preserving embeddings}	
	Let $(N,g)$ be a Riemannian manifold and let $M$ be a compact, connected and 
	oriented submanifold of $N\,.$ We denote by $\text{Emb}(M,N)$ 
	the space of smooth embeddings from $M$ into $N\,.$

	For an embedding $f\,:\,M\hookrightarrow N\,,$ we denote by $\mu^{f}$ the volume form on $f(M)$ induced by the 
	restriction of the metric $g$ to the submanifold $f(M)$ 
	(the orientation on $f(M)$ being induced by $f$). With this terminology, 
	we define the space of \textit{volume preserving embeddings} as
	\begin{eqnarray}
		 \textup{Emb}_{\mu}(M,N):=\Big\{f\in\textup{Emb}(M,N)\,\Big\vert\,f^{*}\mu^{f}=\mu\Big\}\,,
	\end{eqnarray}
	where $\mu$ is the Riemannian volume form on $M$ induced by the metric $g\,.$\\

	The aim of this section is to use the Nash-Moser inverse function Theorem (as formulated in \cite{Hamilton}) 
	to define a differentiable structure on the open subset
	\begin{eqnarray}
		\text{Emb}_{\mu}(M,N)^{\times}:=
			\Big\{f\in\text{Emb}_{\mu}(M,N)\,\Big\vert\,(\textup{Tr}\,\Pi_{f})_{x}\neq 0\textup{ for all }x\in f(M)\Big\}\,,
	\end{eqnarray}
	where $\textup{Tr}\,\Pi_{f}$ denotes the trace of the second fundamental form of $f(M)\,.$	

	For the reader's convenience, let us recall that the second fundamental form $\Pi_{f}$ of 
	the submanifold $f(M)$ is defined, for $x\in f(M)$ and for two vector fields $X,Y$ on $f(M)\,,$ by 
	\begin{eqnarray}
		(\Pi_{f})_{x}(X,Y):=\nabla_{\widetilde{X}}\widetilde{Y}-\nabla^{f}_{X}Y\,,
	\end{eqnarray}
	where $\nabla$ (resp. $\nabla^{f}$) is the Levi-Civita connection on $N$ (resp. $f(M)$) 
	induced by $g$ (resp. $g\vert_{f(M)}$), and where $\widetilde{X},\widetilde{Y}$ are vector fields on $N$ extending $X$ and $Y\,.$

	Let us also recall that the trace of the second fundamental form $\Pi_{f}$ is defined, for $x\in f(M)\,,$ by 
	\begin{eqnarray}
		(\textup{Tr}\,\Pi_{f})_{x}:=\sum_{i=1}^{k}\,\Pi_{f}(e_{i},e_{i})\,,
	\end{eqnarray}	
	where $k$ is the dimension of $M$ and where $\{e_{1},...,e_{k}\}$ is an orthonormal basis for $T_{x}f(M)\,.$ 
	In particular, $\textup{Tr}\,\Pi_{f}$ is a section of the normal bundle $\textup{Nor}_{f}$ of $f(M)\,,$ the latter bundle 
	being, by definition, the vector bundle over $f(M)$ whose fiber over $x\in f(M)$ 
	is
	\begin{eqnarray}
		(\textup{Nor}_{f})_{x}:=\big\{u_{x}\in T_{x}N\,\big\vert\,
		g_{x}(u_{x},v_{x})=0\,\,\textup{for all}\,\,v_{x}\in T_{x}f(M)\big\}\,.
	\end{eqnarray}	
	Finally, recall that $\textup{Tr}\,\Pi_{f}$ is, up to a multiplicative constant which depend on convention, 
	the mean curvature of the submanifold $f(M)\,.$

\subsection{Hamilton's category of tame Fr\'echet manifolds}
\label{subsection the category of hamitlon}
	In this section, we review very briefly the category of tame Fr\'{e}chet manifolds 
	introduced by Hamilton in \cite{Hamilton}. 
\begin{definition}
		\begin{description}
			\item[$(i)$] A graded Fr\'echet space $(F,\{\|\,.\,\|_{n}\}_{n\in\mathbb{N}})\,,$
				is a Fr\'echet space $F$ whose topology is defined by a collection of seminorms 
				$\{\|\,.\,\|_{n}\}_{n\in\mathbb{N}}$ which are increasing in strength:
				\begin{eqnarray}
					\|x\|_{0}\leq\|x\|_{1}\leq\|x\|_{2}\leq \cdots
				\end{eqnarray}
				for all $x\in F\,.$
			\item[$(ii)$] A linear map $L\,:\,F\rightarrow G$ between two graded Fr\'echet spaces
				$F$ and $G$ is tame (of degree $r$ and base $b$) if for all $n\geq b\,,$
				there exists a constant $C_{n}>0$ such that for all $x\in F\,,$
				\begin{eqnarray}
					\|L(x)\|_{n}\leq C_{n}\,\|x\|_{n+r}\,.
				\end{eqnarray}
			\item[$(iii)$] If $(B,\|\,.\,\|_{B})$ is a Banach space, then $\Sigma(B)$ denotes 
				the graded Fr\'echet space of all sequences $\{x_{k}\}_{k\in\mathbb{N}}$ of 
				$B$ such that for all $n\geq 0,$ 
				\begin{eqnarray}
					\|\{x_{k}\}_{k\in\mathbb{N}}\|_{n}:=\displaystyle\Sigma_{k=0}^{\infty}\,
					e^{nk}\|x_{k}\|_{B}<\infty\,.
				\end{eqnarray}
			\item[$(iv)$] A graded Fr\'echet space $F$ is tame if there exist a Banach space 
				$B$ and two tame linear maps $i\,:\,F\rightarrow \Sigma(B)$ and 
				$p\,:\,\Sigma(B)\rightarrow F$ such that $p\circ i$ is the identity on $F\,.$
			\item[$(v)$] Let $F,G$ be two tame Fr\'echet spaces, $U$ an open subset of 
				$F$ and $f\,:\,U\rightarrow G$ a map. We say that $f$ is a smooth tame map
				if $f$ is smooth\footnote{By smooth we mean that 
				$f\,:\,U\subseteq F\rightarrow G$ 
				is continuous and that 
				for all $k\in\mathbb{N}\,,$ the $k$th derivative 
				$d^{k}f\,:\,U\times F\times \cdots \times F
				\rightarrow G$ exists and is jointly continuous on the product  space, such as 
				described in \cite{Hamilton}.\label{footnote}} 
				and if for every $k\in\mathbb{N}$ and for every 
				$(x,u_{1},...,u_{k})\in U\times F\times \cdots F\,,$ there exist a neighborhood
				$V$ of $(x,u_{1},...,u_{k})$ in $U\times F\times \cdots F$ and 
				$b_{k},r_{0},...,r_{k}\in\mathbb{N}$ such that for every $n\geq b_{k}\,,$ there 
				exists $C_{k,n}^{V}>0$ such that 
				\begin{eqnarray}
					&&\|d^{k}f(y)\{v_{1},...,v_{k}\}\|_{n}
					\,\,\leq \,\,C_{k,n}^{V}\,\big(1+\|y\|_{n+r_{0}}
					+\|v_{1}\|_{n+r_{1}}+\cdots+\|v_{k}\|_{n+r_{k}}\big)\,,
				\end{eqnarray}
				for every $(y,v_{1},...,v_{k})\in V\,,$ where $d^{k}f\,:\,
				U\times F\times\cdots\times F\rightarrow G$ denotes the $k$th derivative of 
				$f\,.$
		\end{description}
\end{definition}
\begin{remarque}
	In this paper, we use interchangeably the notation 
	$(df)(x)\{v\}$ or $f_{*_{x}}v$ for the first derivative of $f$ at a 
	point $x$ in direction $v\,.$ 
\end{remarque}
	As one may notice, tame Fr\'echet spaces and smooth tame maps form 
	a category, and it is thus natural to define a tame Fr\'echet manifold 
	as a Hausdorff topological space with an atlas of coordinates charts taking their value in
	tame Fr\'echet spaces, such that the coordinate transition functions are all
	smooth tame maps (see \cite{Hamilton}).
	The definition of a tame smooth map between tame Fr\'echet manifolds is then 
	straightforward, and we thus obtain a subcategory of the category of Fr\'echet manifolds.\\
	In order to avoid confusion, let us also make precise our notion of submanifold. We will say 
	that a subset $\mathcal{M}$ of a 
	tame Fr\'echet manifold $\mathcal{M}\,,$ endowed with the trace topology, 
	is a submanifold, 
	if for every point $x\in \mathcal{M}\,,$ there exists a chart 
	$(\mathcal{U},\varphi)$ of $\mathcal{M}$ such that $x\in \mathcal{U}$ and such that 
	$\varphi(\mathcal{U}\cap \mathcal{M})=U\times \{0\}\,,$ where 
	$\varphi(\mathcal{U})=U\times V$ is a product of two open subsets of tame Fr\'echet spaces. 
	Note that a submanifold of a tame Fr\'echet manifold is also a tame Fr\'echet manifold.

\begin{theoreme}[Nash-Moser inverse function Theorem, \cite{Hamilton}]
	Let $F,G$ be two tame Fr\'echet spaces, $U$ an open subset of 
	$F$ and $f\,:\, U\rightarrow G$ a smooth tame map. If there exists 
	an open subset $V\subseteq U$ such that 
	\begin{description}
		\item[$(i)$] $df(x)\,:\,F\rightarrow G$ is an linear isomorphism for all 
			$x\in V\,,$
		\item[$(ii)$] the map $V\times G\rightarrow F,\,(x,v)\mapsto 
			\big(df(x)\big)^{-1}\{v\}$ is a smooth tame map, 
	\end{description}
	then $f$ is locally invertible on $V$ and each local inverse is a
	smooth tame map. 
\end{theoreme}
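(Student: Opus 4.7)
The classical Banach-space proof of the inverse function theorem proceeds by Newton iteration, $x_{n+1} := x_n + (df(x_n))^{-1}\{y - f(x_n)\}$, followed by a contraction argument. In the Fr\'echet setting this fails because hypothesis $(ii)$ only tells us that $(df)^{-1}$ is tame of some degree $r \geq 0$, so each step of the iteration loses $r$ derivatives and the seminorms blow up. My plan is to follow the strategy of Moser and Hamilton: compensate the loss of derivatives at each step with a carefully chosen smoothing operator.

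The first step is to construct such smoothing operators on $F$. Because $F$ is a tame Fr\'echet space, we have maps $i \colon F \to \Sigma(B)$ and $p \colon \Sigma(B) \to F$ with $p \circ i = \mathrm{id}_F\,.$ On $\Sigma(B)\,,$ fix a smooth cutoff $\chi \colon \mathbb{R} \to [0,1]$ with $\chi \equiv 1$ near $0$ and $\chi \equiv 0$ on $[1,\infty)\,,$ and for $t \geq 1$ set $\widetilde{S}_t \{x_k\}_{k \in \mathbb{N}} := \{\chi(k/\log t)\, x_k\}_{k \in \mathbb{N}}\,.$ Then $S_t := p \circ \widetilde{S}_t \circ i$ is a family of tame operators on $F$ satisfying the standard gain estimate $\|S_t x\|_{n+s} \leq C\, t^s \|x\|_n\,,$ the approximation estimate $\|(I - S_t) x\|_n \leq C\, t^{-s} \|x\|_{n+s}\,,$ and the Nash-Moser interpolation inequalities $\|x\|_n \leq C \|x\|_m^{1 - \theta} \|x\|_k^{\theta}$ for $m \leq n \leq k\,.$

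The modified Newton iteration then reads
\begin{eqnarray*}
x_{n+1} := x_n + S_{t_n} (df(x_n))^{-1}\{y - f(x_n)\}, \qquad x_0 \in V,
\end{eqnarray*}
with a rapidly increasing sequence $t_n$ (for example $t_n = t_0^{(3/2)^n}$). Taylor's formula applied along the segment from $x_n$ to $x_{n+1}\,,$ combined with the tame bounds on $f\,,$ $df\,,$ $(df)^{-1}$ and on $S_{t_n}\,,$ yields an almost-quadratic convergence estimate of the form
\begin{eqnarray*}
\|f(x_{n+1}) - y\|_b \leq C\, \|f(x_n) - y\|_b^2 + C\, t_n^{-N}\, \|f(x_n) - y\|_{b + M},
\end{eqnarray*}
for integers $b, M, N$ depending on the tame degrees involved. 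The quadratic term comes from the standard Newton error, the second from the smoothing approximation bound applied to $I - S_{t_n}\,.$

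The main obstacle, and the heart of Hamilton's proof, is closing a simultaneous induction on a finite block of seminorms $\|x_n\|_{b+k}$ and $\|f(x_n) - y\|_{b+k}\,,$ in which the high seminorms are allowed to grow polynomially in $t_n$ while the low seminorms decay super-exponentially. This balancing is possible precisely because the interpolation inequalities control the intermediate seminorms from the two extremes, and it is there that the growth rate of $t_n$ is pinned down. Once $x_n$ is shown to converge in every seminorm, continuity of $f$ gives $f(x) = y\,,$ so $x$ is a local inverse. Smoothness and tameness of the inverse map $y \mapsto x$ then follow by applying the same iteration parametrically to $f(g(y)) = y\,,$ or equivalently by differentiating this identity to get $(dg)(y) = (df(g(y)))^{-1}$ and estimating difference quotients with the same tame bounds.
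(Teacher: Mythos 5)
The paper does not prove this theorem: it is quoted verbatim from Hamilton \cite{Hamilton}, so there is no in-paper argument to compare against. Your sketch correctly identifies the strategy of the cited proof --- smoothing operators extracted from the tame structure via $\Sigma(B)\,,$ a Newton iteration damped by $S_{t_{n}}\,,$ an error estimate with a quadratic term plus a smoothing term, and an induction balancing super-exponentially decaying low seminorms against polynomially growing high ones.

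Two things keep your text from being a proof rather than a road map. First, the interpolation inequalities $\|x\|_{n}\leq C\|x\|_{m}^{1-\theta}\|x\|_{k}^{\theta}$ do not hold verbatim in an arbitrary tame Fr\'echet space: transporting them from $\Sigma(B)$ through $i$ and $p$ costs the tame degrees of those maps, and this is precisely why Hamilton first reduces the entire problem to the model spaces $\Sigma(B)$ (where smoothing and interpolation are clean and $df(x_{0})$ can be normalized) before running any iteration; your argument skips that reduction. Second, and more seriously, the ``simultaneous induction on a finite block of seminorms'' that you describe in one sentence \emph{is} the theorem: one must exhibit the exponents $b,M,N\,,$ show that the growth rate $t_{n}=t_{0}^{(3/2)^{n}}$ is compatible with the tame degrees of $f\,,$ $df$ and $(df)^{-1}\,,$ and verify that the iterates remain in $V$ so that hypothesis $(ii)$ continues to apply at each step. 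Writing ``this balancing is possible'' is where the proof would have to start, not end. Finally, deducing tameness of the inverse by differentiating $f(g(y))=y$ presupposes that $g$ is already known to be differentiable; Hamilton needs a separate bootstrap (continuity, then a tame Lipschitz-type estimate, then $C^{1}\,,$ then smooth tame) before that identity can be used.
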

%	Finally, and quite apart from the category of Hamilton, let us remind one of the most 
%	useful result of the convenient calculus 
%	(see \cite{Kriegl-Michor}) :
%\begin{lemma}\label{lemme convenient calculus encore}
%	Let $F,G$ be two Fr\'echet spaces, $U$ an open subset of $F$ and $f\,:\,U\rightarrow G$
%	a map. Then $f$ is smooth is the sense of Hamilton (see footnote \ref{footnote}), 
%	if and only if 
%	$f\circ c\,:\,I\rightarrow \mathbb{R}$ is a smooth curve in $G$ whenever
%	$c\,:\,I\rightarrow U$ is a smooth curve in $U\,.$
%\end{lemma}
%	As one may show, if $M,N$ are manifolds, $M$ being compact, then 
%	a smooth curve in the Fr\'echet manifold 
%	$C^{\infty}(M,N)$ may by identified with a smooth map $f\,:\,I\times M\rightarrow N\,,$
%	its time derivative being identified with the partial derivative of $f$ 
%	with respect to $t\,.$ From this together with Lemma \ref{lemme convenient calculus encore}\,,
%	it is usually easy to show that a map defined between 
%	submanifolds of spaces of maps is smooth : it suffices to compose this map 
%	with a smooth curve, and then to check that the result is smooth in the ``finite 
%	dimensional sense" with respect to all the ``finite dimensional" variables (see 
%	\cite{Kriegl-Michor}).  

\subsection{The differentiable structure of $\text{Emb}_{\mu}(M,N)^{\times}$}\label{sss madco encore...}
	Let $\Sigma$ be an oriented submanifold of $N$ endowed with the Riemannian volume form $\mu^{\Sigma}$ 
	induced by $g\,.$ 

	We shall use the following terminology:
	\begin{description}
		\item[$\bullet$] $TN\vert_{\Sigma}$ is the restriction of the bundle $TN$ to 
			$\Sigma$ with associated space of sections $\Gamma(TN\vert_{\Sigma})\,.$ 
		\item[$\bullet$] For a  vector field $X\in \mathfrak{X}(\Sigma)\,,$ 
				$\textup{div}_{\Sigma}(X)$
				is the divergence of $X$ with respect to the volume form $\mu^{\Sigma}\,,$ 
				i.e., it is the only function which satisfies 
				$\mathcal{L}_{X}\mu^{\Sigma}=\textup{div}_{\Sigma}(X)\cdot \mu^{\Sigma}\,,$
				where $\mathcal{L}_{X}$ is the Lie derivative in direction $X\,.$
		\item[$\bullet$] $\Gamma_{\mu}(TN\vert_{\Sigma}):=\big\{X\in \Gamma(TN\vert_{\Sigma})\,\big\vert\, 
				\textup{div}_{\Sigma}(X^{\top})-g(X^{\perp},\textup{Tr}\,\Pi_{\Sigma})=0\big\}\,,$
				where $X^{\top}$ and $X^{\perp}$ are respectively the tangential and orthogonal 
				projections of $X$ on the tangent and normal bundles of $\Sigma\,.$ 
	\end{description}
	If $\Sigma=f(M)$ for some embedding $f\in \textup{Emb}(M,N)\,,$ then we shall 
	replace $``\Sigma"$ by $``f"$ in the above notation. For example, $\textup{div}_{f}$ instead of $\textup{div}_{\Sigma}\,,$ etc. 
\begin{proposition}\label{decomposition de Hel-Hod}
	Let $\Sigma$ be a compact, connected, oriented submanifold of $N$ whose mean curvature is not identically zero. 
	Then, for every section $X$ of $TN\vert_{\Sigma}\,,$ 
	there exist a unique $X_{\mu}\in \Gamma_{\mu}(TN\vert_{\Sigma})$ and a unique function $p\,:\,\Sigma\rightarrow \mathbb{R}$ such that
\begin{eqnarray}\label{decomposition explicite}
		 X=X_{\mu}+\textup{grad}(p)+p\cdot\textup{Tr}\,\Pi_{\Sigma}\,,
	\end{eqnarray}
	where $\textup{grad}(p)\in \mathfrak{X}(\Sigma)$ is the Riemannian gradient of $p$ taken with respect to $g\vert_{\Sigma}\,.$
\end{proposition}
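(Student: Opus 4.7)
The plan is to reduce the decomposition problem to a scalar elliptic PDE on $\Sigma$. Given a section $X \in \Gamma(TN|_\Sigma)$, split it orthogonally as $X = X^\top + X^\perp$, and look for $p \in C^\infty(\Sigma)$ such that
$$X_\mu := X - \mathrm{grad}(p) - p \cdot \mathrm{Tr}\,\Pi_\Sigma$$
lies in $\Gamma_\mu(TN|_\Sigma)$. Since $\mathrm{grad}(p)$ is tangent to $\Sigma$ and $p \cdot \mathrm{Tr}\,\Pi_\Sigma$ is normal, the membership condition reduces (using $\mathrm{div}_\Sigma \circ \mathrm{grad} = \Delta_\Sigma$) to the single scalar equation
$$L(p)\,:=\,\Delta_\Sigma(p) \,-\, |\mathrm{Tr}\,\Pi_\Sigma|^{2}\,p \;=\; \mathrm{div}_\Sigma(X^\top) \,-\, g(X^\perp,\mathrm{Tr}\,\Pi_\Sigma).$$
If this PDE admits a unique solution $p$ for every right-hand side, then setting $X_\mu$ as above gives existence of the decomposition, and uniqueness of $(X_\mu,p)$ reduces to uniqueness in the PDE.

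The heart of the argument is therefore to prove that $L : C^\infty(\Sigma) \to C^\infty(\Sigma)$ is an isomorphism. The operator $L$ is second-order, linear, formally self-adjoint, and elliptic on the compact manifold $\Sigma$, so standard Fredholm theory together with elliptic regularity yields $\dim \ker L = \dim \mathrm{coker}\,L$; it therefore suffices to show injectivity. If $L(p) = 0$, multiply by $p$, integrate against $\mu^\Sigma$, and integrate by parts (no boundary terms arise since $\Sigma$ is compact and boundaryless) to obtain
$$-\int_\Sigma |\mathrm{grad}(p)|^{2}\,\mu^\Sigma \;=\; \int_\Sigma |\mathrm{Tr}\,\Pi_\Sigma|^{2}\,p^{2}\,\mu^\Sigma .$$
The left-hand side is non-positive and the right-hand side non-negative, so both vanish. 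The vanishing of the first forces $p$ to be constant (using connectedness of $\Sigma$), and the vanishing of the second, combined with the hypothesis that $\mathrm{Tr}\,\Pi_\Sigma$ is not identically zero, forces $p$ to vanish at some point, hence everywhere. For uniqueness of the decomposition: given two pairs $(X_\mu,p)$ and $(X_\mu',p')$, the difference $q := p - p'$ satisfies $\mathrm{grad}(q) + q\cdot\mathrm{Tr}\,\Pi_\Sigma \in \Gamma_\mu(TN|_\Sigma)$, which unwinds to $L(q) = 0$ and hence $q = 0$.

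The principal obstacle is the elliptic-analytic input: recognizing that the sign of the zero-order term $-|\mathrm{Tr}\,\Pi_\Sigma|^{2}\,p$ cooperates with integration by parts so that both energy terms have opposite signs and vanish simultaneously, and observing that the hypothesis on the mean curvature is used at exactly one point — to eliminate the constant solutions of $\Delta_\Sigma p = |\mathrm{Tr}\,\Pi_\Sigma|^{2}\,p$. Once injectivity of $L$ is in hand, the decomposition and its uniqueness follow by mechanical bookkeeping, and the smoothness of $X_\mu$ and $p$ is automatic from elliptic regularity applied to $L$.
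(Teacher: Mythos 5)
Your proof is correct and follows the same overall strategy as the paper: reduce the decomposition to the scalar elliptic equation $\triangle p-\Vert\textup{Tr}\,\Pi_{\Sigma}\Vert^{2}p=\textup{div}_{\Sigma}(X^{\top})-g(X^{\perp},\textup{Tr}\,\Pi_{\Sigma})$ on $\Sigma$, show that the operator $\triangle-c$ with $c=\Vert\textup{Tr}\,\Pi_{\Sigma}\Vert^{2}\geq 0$ has index zero, and then establish injectivity (hence bijectivity), with uniqueness of the pair $(X_{\mu},p)$ reducing to uniqueness for the PDE. The two places where you diverge are in the elliptic inputs: the paper gets index zero by deforming $\triangle-c$ to $\triangle$ and invoking the topological invariance of the analytical index, whereas you invoke formal self-adjointness so that $\dim\ker=\dim\textup{coker}$; and for injectivity the paper cites the maximum principle to conclude that $\ker(\triangle-c)$ consists of constants, whereas you multiply by $p$, integrate by parts, and observe that $-\int_{\Sigma}|\textup{grad}(p)|^{2}\,\mu^{\Sigma}=\int_{\Sigma}c\,p^{2}\,\mu^{\Sigma}$ forces both sides to vanish. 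Your energy argument is more self-contained and elementary (it needs only Stokes on a closed manifold, not a maximum principle for $\triangle-c$ with $c\geq 0$), and it isolates exactly where the hypothesis $\textup{Tr}\,\Pi_{\Sigma}\not\equiv 0$ enters; the paper's version, on the other hand, sets up the homotopy-invariance and maximum-principle machinery that it reuses later in Lemma \ref{lemme Q est un diffeo local} for the perturbed, non-self-adjoint operators $(P_{f})_{*_{X}}(\textup{grad}(p)+p\cdot\textup{Tr}\,\Pi_{f})\circ f$, where your self-adjointness and energy arguments would no longer apply directly.
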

\begin{proof}
	Let $X$ be an element of $\Gamma\big(TN\vert_{\Sigma}\big)\,.$ 
	If $X$ could be written $X=X_{\mu}+\textup{grad}(p)+p\cdot\textup{Tr}\,\Pi_{\Sigma}$ 
	with $X_{\mu}\in\Gamma_{\mu}\big(TN\vert_{\Sigma}\big)$ 
	and $p\in C^{\infty}(\Sigma,\mathbb{R})\,,$ then $p$ would be a solution of the 
	following partial differential equation :
	\begin{eqnarray}\label{eq HH}
		 \textup{div}_{\Sigma}(X^{\top})-g(X^{\perp},\textup{Tr}\,\Pi_{\Sigma})=
		\triangle p-\Vert\textup{Tr}\,\Pi_{\Sigma}\Vert^{2}\cdot p\,.
	\end{eqnarray}
	The differential operator $\triangle p-\Vert\textup{Tr}\,\Pi_{\Sigma}\Vert^{2}\cdot p$ acting 
	on functions $p\,:\,\Sigma\rightarrow\mathbb{R}$ is an operator of the 
	form $\triangle-c\,,$ where $c$ is a smooth function, and, being an 
	elliptic operator, it is well known that this operator is Fredholm, and that its analytical index is a 
	topological invariant (see \cite{Palais-seminar}). Hence, the 
	index of $\triangle-c$ equals the index of $\triangle\,,$ which is zero on the space 
	$C^{\infty}(\Sigma,\mathbb{R})\,.$ Moreover, as $c$ is nonnegative, and since 
	$\Sigma$ connected, we can use the maximum principle (see for example \cite[p.96]{Aubin} or 
	\cite[Thm 24.10, p.355]{Jost-postmodern3}), to deduce that the kernel of 
	$\triangle-c\,:\,C^{\infty}(\Sigma,\mathbb{R})\rightarrow C^{\infty}(\Sigma,\mathbb{R})$ 
	is included in the space of constant functions, and as $c$ is not identically zero in our case, 
	this kernel has to be trivial. Hence, $\triangle-c$ is bijective, and 
	\eqref{eq HH} posses a unique solution $p\in C^{\infty}(\Sigma,\mathbb{R})\,.$

	Now, if we take a function $p$ solution to \eqref{eq HH} and set $X_{\mu}:
	=X-\textup{grad}(p)-p\cdot\textup{Tr}\,\Pi_{\Sigma}\,,$ then, it is straightforward to check 
	that $X=X_{\mu}+\textup{grad}(p)+p\cdot\textup{Tr}\,\Pi_{\Sigma}$ is the desired decomposition.
\end{proof}
	Proposition \ref{decomposition de Hel-Hod} yields a topological decomposition 
	\begin{eqnarray}\label{decomposition H-H}
	 	\Gamma\big(TN\vert_{\Sigma}\big)=
		\Gamma_{\mu}\big(TN\vert_{\Sigma}\big)\oplus\Gamma_{\mu}\big(TN\vert_{\Sigma}\big)^{\bot}\,,
	\end{eqnarray}
	where 
	\begin{eqnarray}
		\Gamma_{\mu}\big(TN\vert_{\Sigma}\big)^{\bot}:=\big\{\textup{grad}(p)+p\cdot\textup{Tr}\,\Pi_{\Sigma}\in 
		\Gamma(\textup{Nor}_{\Sigma})\,\big\vert\, 
		p\in C^{\infty}(\Sigma,\mathbb{R})\,\big\}\,.
	\end{eqnarray}
	Moreover, by application of Stokes' Theorem, one easily sees that 
	\eqref{decomposition H-H} is an orthogonal decomposition (whence the notation 
	$``\Gamma_{\mu}\big(TN\vert_{\Sigma}\big)^{\bot}"$) with respect to the following weak scalar product :
	\begin{eqnarray}
		 \Gamma\big(TN\vert_{\Sigma}\big)\times
		\Gamma\big(TN\vert_{\Sigma}\big)
		\rightarrow\mathbb{R}\,,\,\,\,\,(X,Y)\mapsto \int_{\Sigma}\,g(X,Y)\cdot\mu^{\Sigma}\,.
	\end{eqnarray}
\begin{remarque}$\text{}$ \label{remarque sur la decomposition, la vraie de H-H}
	\begin{center}
	\begin{description}
		\item[$(i)$] Proposition \ref{decomposition de Hel-Hod} also holds for 
			$\Sigma=N\,.$ In this case, \eqref{decomposition H-H} reduces to the well known Helmholtz-Hodge 
			decomposition for vector fields (see for example \cite[p.341]{Arnold}) :
			\begin{eqnarray}\label{equation de la decomposition de HH, la vraie, encore}
				 \mathfrak{X}(N)=\mathfrak{X}_{\mu}(N)\oplus \textup{grad}\big(C^{\infty}(N,\mathbb{R})\big)\,,
			\end{eqnarray}
			where $\mathfrak{X}_{\mu}(N):=\{X\in\mathfrak{X}(N)\,\vert\,\textup{div}(X)=0\}$ 
			is the space of divergence free vector fields on $N$ (here $\mu$ denotes the Riemannian volume form of $(N,g)$). 
		\item[$(ii)$] A direct consequence of the existence of a topological direct 
			summand for $\Gamma_{\mu}\big(TN\vert_{\Sigma}\big)$ in 
			the tame Fr\'echet space 
			$\Gamma\big(TN\vert_{\Sigma}\big),$ is that the space 
			$\Gamma_{\mu}\big(TN\vert_{\Sigma}\big)$ is also a tame 
			Fr\'echet space (see \cite{Hamilton}).
		\item[$(iii)$] We shall denote by $\mathbb{P}_{\Sigma}\,:\,\Gamma\big(TN\vert_{\Sigma}\big)
			\rightarrow\Gamma_{\mu}\big(TN\vert_{\Sigma}\big)$ 
			the continuous projection given by Proposition \ref{decomposition de Hel-Hod}.
	\end{description}\end{center}
\end{remarque}
	Let $\text{Emb}(M,N)^{\times}$ be the open subset of $\text{Emb}(M,N)$
	defined by 
	\begin{eqnarray}
		\textup{Emb}(M,N)^{\times}:=
		\Big\{f\in\textup{Emb}(M,N)\,\Big\vert\,(\textup{Tr}\,\Pi_{f})_{x}\neq 0\textup{ for all }x\in f(M)\Big\}\,.
	\end{eqnarray}
	The global version of Proposition \ref{decomposition de Hel-Hod} is :
\begin{theoreme}\label{Emn variete tame}
	The space $\textup{Emb}_{\mu}(M,N)^{\times}$ is a tame 
	Fr\'echet submanifold of the Fr\'echet manifold 
	$\textup{Emb}(M,N)^{\times},$ and for $f\in \textup{Emb}_{\mu}(M,N)^{\times},$ 
	we have the following natural isomorphism 
	\begin{eqnarray}
		T_{f}\textup{Emb}_{\mu}(M,N)^{\times}\cong\Gamma_{\mu}\big(f^{*}TN\big),
	\end{eqnarray}
	where $\Gamma_{\mu}\big(f^{*}TN\big):=
	\big\{X\in \Gamma(f^{*}TN)\,\big\vert\, X\circ f^{-1}\in \Gamma_{\mu}\big(TN\vert_{f(M)}\big)\big\}\,.$
	%\Gamma_{\mu}\big(TN\vert_{f(M)}\big)\circ f\,.$
\end{theoreme}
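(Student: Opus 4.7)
The plan is to construct local submanifold charts around each $f_{0} \in \textup{Emb}_{\mu}(M,N)^{\times}$ by applying the Nash-Moser inverse function Theorem to a straightening map built out of Proposition \ref{decomposition de Hel-Hod}. First, I would parametrize a neighborhood of $f_{0}$ in $\textup{Emb}(M,N)^{\times}$ by small sections $Y \in \Gamma(f_{0}^{*}TN)$ via the $g$-exponential map,
\[
h_{Y}(x) := \exp_{f_{0}(x)}\bigl(Y(x)\bigr),
\]
noting that openness of $\textup{Emb}(M,N)^{\times}$ inside $\textup{Emb}(M,N)$ keeps $h_{Y}$ inside $\textup{Emb}(M,N)^{\times}$ for $Y$ small enough. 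Identifying $\Gamma(f_{0}^{*}TN)$ with $\Gamma(TN\vert_{f_{0}(M)})$ through $f_{0}$, Proposition \ref{decomposition de Hel-Hod} yields a tame linear splitting $\Gamma(f_{0}^{*}TN) \cong \Gamma_{\mu}(f_{0}^{*}TN) \oplus C^{\infty}(M,\mathbb{R})$, where a pair $(Y_{\mu}, p)$ corresponds to $Y = Y_{\mu} + \textup{grad}(p) + p \cdot \textup{Tr}\,\Pi_{f_{0}}$.

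Let $\rho(Y_{\mu}, p) \in C^{\infty}(M,\mathbb{R})$ be the unique function satisfying $h_{Y}^{*}\mu^{h_{Y}} = (1 + \rho(Y_{\mu}, p))\,\mu$, so that the equation $\rho(Y_{\mu}, p) = 0$ detects precisely those nearby embeddings lying in $\textup{Emb}_{\mu}(M,N)^{\times}$. I would then study the smooth tame map
\[
\widetilde{F} : \mathcal{V} \subset \Gamma_{\mu}(f_{0}^{*}TN) \oplus C^{\infty}(M,\mathbb{R}) \longrightarrow \Gamma_{\mu}(f_{0}^{*}TN) \oplus C^{\infty}(M,\mathbb{R}), \qquad \widetilde{F}(Y_{\mu}, p) := \bigl(Y_{\mu},\, \rho(Y_{\mu}, p)\bigr).
\]
A direct first variation of the Riemannian volume form gives
\[
(d\rho)(0,0)\{(Z, q)\} \,=\, \bigl[\textup{div}_{f_{0}}(Z^{\top}) - g(Z^{\perp}, \textup{Tr}\,\Pi_{f_{0}})\bigr] + \bigl[\triangle q - \Vert \textup{Tr}\,\Pi_{f_{0}}\Vert^{2}\, q\bigr],
\]
whose first bracket vanishes by the very definition of $\Gamma_{\mu}(f_{0}^{*}TN)$. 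Since, as shown inside the proof of Proposition \ref{decomposition de Hel-Hod}, the operator $\triangle - \Vert \textup{Tr}\,\Pi_{f_{0}}\Vert^{2}$ is a bijection on $C^{\infty}(M,\mathbb{R})$, the full derivative $d\widetilde{F}(0,0)$ is a topological linear isomorphism.

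Applying the Nash-Moser inverse function Theorem would then produce a local tame inverse of $\widetilde{F}$. Composed with the exponential parametrization and the Helmholtz-Hodge splitting, this yields a chart on $\textup{Emb}(M,N)^{\times}$ that straightens $\textup{Emb}_{\mu}(M,N)^{\times}$ onto an open piece of $\Gamma_{\mu}(f_{0}^{*}TN) \times \{0\}$, simultaneously giving the submanifold structure and the identification $T_{f_{0}}\textup{Emb}_{\mu}(M,N)^{\times} \cong \Gamma_{\mu}(f_{0}^{*}TN)$. I expect the main obstacle to be hypothesis $(ii)$ of Nash-Moser, namely tameness of the family inverse $(Y_{\mu}, p) \mapsto d\widetilde{F}(Y_{\mu}, p)^{-1}$. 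Since the first factor of $\widetilde{F}$ is the identity on $Y_{\mu}$, this reduces to tame control of the inverse of the parameter-dependent second order elliptic operator obtained by differentiating $\rho$ in the $p$ direction, which is a small perturbation of $\triangle - \Vert \textup{Tr}\,\Pi_{f_{0}}\Vert^{2}$. Openness of $\textup{Emb}(M,N)^{\times}$ keeps the perturbed mass term $\Vert \textup{Tr}\,\Pi_{h_{Y}}\Vert^{2}$ nowhere vanishing, so the perturbed operator remains bijective by the same maximum principle argument; tameness of its inverse, and of $\widetilde{F}$ itself, should then follow from the Hamilton-Nash-Moser tame estimates for families of elliptic operators and for compositions with the exponential map, as developed in \cite{Hamilton}.
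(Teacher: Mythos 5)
Your proposal is correct and follows essentially the same route as the paper: your map $\widetilde{F}(Y_{\mu},p)=(Y_{\mu},\rho(Y_{\mu},p))$ is exactly the paper's straightening map $Q_{f}(X)=(X_{\mu},P_{f}(X)-1)$, and the paper likewise verifies the Nash--Moser hypotheses by showing that the relevant derivative reduces to the elliptic operator $\triangle-\Vert\textup{Tr}\,\Pi_{f}\Vert^{2}$ and its small perturbations, invertible by the index-zero plus maximum-principle argument, with tameness of the family of inverses supplied by Hamilton's theorem on elliptic families.
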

	In order to show Theorem \ref{Emn variete tame}, let us recall the construction of the ``standard'' 
	chart $(U_{f},\varphi_{f})$ of $\textup{Emb}(M,N)$ centered at a point $f\in\textup{Emb}(M,N)\,.$ For this, 
	we need
	\begin{description}
		\item[$\bullet$] a sufficiently small neighborhood 
			$\Theta_{f}\subseteq f^{*}TN$ of the zero section of $f^{*}TN$ such that 
			the map $\Theta_{f}\rightarrow N\times N,\,v_{x}\in\Theta_{f}\cap 
			T_{x}N\mapsto\big(x,\textup{exp}_{x}(v_{x})\big)$ 
			(here the exponential map $\textup{exp}$ is taken with 
			respect to the metric $g$) is a diffeomorphism onto an open subset of $N\times N\,.$
		\item[$\bullet$] $\varphi_{f}(U_{f}):=\{X\in\Gamma\big(f^{*}TN\big)\,
			\vert\,X(M)\subseteq\Theta_{f} \}\,.$
		\item[$\bullet$] $\varphi_{f}^{-1}\,:\,\varphi_{f}(U_{f})\rightarrow U_{f}$ 
			is defined for $X\in\varphi_{f}(U_{f})$ and $x\in M\,,$ 
			by $\big(\varphi_{f}^{-1}(X)\big)(x):=\textup{exp}_{f(x)}X_{x}\,.$ 
			For brevity's sake, we shall write 
			$f_{X}:=\varphi_{f}^{-1}(X)\in \text{Emb}(M,N)$ ($f_{X}$ can be seen as a 
			``perturbation of the embedding $f$ by the vector field $X$'').
	\end{description}
	It is well known that $\textup{Emb}(M,N)$ endowed with these charts is a tame Fr\'echet 
	manifold (see for example \cite{Hamilton,Kriegl-Michor}), and it is clear, 
	restricting the open sets $\Theta_{f}$ if necessary, 
	that we also get an atlas for $\text{Emb}(M,N)^{\times}\,.$\\

	We will also need the following map	
	\begin{eqnarray}
		\text{Emb}(M,N)\overset{\rho}{\longrightarrow} 
		C^{\infty}(M,\mathbb{R}),\,\,\,\,f\mapsto f^{*}\mu^{f}/\mu\,,
	\end{eqnarray}
	i.e., for $f\in\text{Emb}(M,N),$  $\rho(f)$ 
	is the unique function satisfying $f^{*}\mu^{f}=\rho(f)\cdot \mu$ on $M\,.$ Observe that $\rho(f)>0\,.$
	
	Finally, for $f\in\text{Emb}(M,N)^{\times}\,,$ we define
	\begin{center}
		\begin{description}
			 \item[$\bullet$] $P_{f}\,:\,\varphi_{f}(U_{f})\rightarrow 
				C^{\infty}(M,\mathbb{R}),\,X\mapsto (\rho\circ \varphi_{f}^{-1})(X)$ ($P_{f}$ 
				is nothing but the local expression of $\rho$ in the chart $(U_{f},\varphi_{f})).$ 
				%is defined via the relation 
				%\begin{eqnarray}
				%(f_{X})^{*}\mu^{f_{X}(M)}=P_{f}(X)\cdot \mu^{M}\,,
				%\end{eqnarray}
			\item[$\bullet$] %whereas,
				$Q_{f}\,:\,\varphi_{f}(U_{f})\rightarrow 
				\Gamma_{\mu}\big(TN\vert_{f(M)}\big)\oplus 
				C^{\infty}(M,\mathbb{R}),\,X\mapsto\big(X_{\mu},P_{f}(X)-1\big),$
				%, for $X=(X_{\mu}+\nabla p+p\cdot\textup{Tr}\,\Pi_{f(M)})\circ 
				%f\in \varphi_{f}(U_{f})\subseteq\Gamma_{C^{\infty}}\big(M,f^{*}TN\big)\,,$ by
				%\begin{eqnarray}
				% Q_{f}(X):=\big(X_{\mu},P_{f}(X)-1\big)\,,
				%\end{eqnarray}
				where we use Proposition \ref{decomposition de Hel-Hod} to write 
				$X=(X_{\mu}+\textup{grad}(p)+p\cdot\textup{Tr}\,\Pi_{f(M)})\circ f\,.$
	\end{description}\end{center}
	Observe that $f_{X}=\varphi_{f}^{-1}(X)\in U_{f}$ is volume preserving if and only if $P_{f}(X)\equiv1\,.$\\

	Following Hamilton in \cite[Thm.2.5.3]{Hamilton}, if we prove that 
	$Q_{f}$ is a local diffeomorphism near the zero section, then it would be 
	possible, using $Q_{f}\circ\varphi_{f}\,,$ to define splitting charts for 
	$\text{Emb}(M,N)^{\times}\,,$ and thus to prove that $\text{Emb}_{\mu}(M,N)^{\times}$ 
	is a tame submanifold of $\text{Emb}(M,N)^{\times}\,.$ We will do this with two lemmas, 
	the main point being the use of the inverse function theorem of Nash-Moser.

\begin{lemme}\label{expression locale de rho}
	The map $P_{f}\,:\,\varphi_{f}(U_{f})\rightarrow C^{\infty}(M,\mathbb{R})$ is a 
	smooth tame map, and its derivative $(P_{f})_{*_{X}}Y$ is a family of linear 
	partial differential operators of degree 1 in $Y$ with coefficients which 
	are nonlinear partial differential operators of degree 1 in $X.$ Moreover, 
	\begin{eqnarray}\label{expression de la derivée de Pf....}
		 (P_{f})_{*_{0}}Y=\Big[\textup{div}_{f}(Y^{\top}\circ f^{-1})-g(Y^{\perp}\circ f^{-1},\textup{Tr}\,\Pi_{f})\Big]
		\circ f\cdot P_{f}(0),
	\end{eqnarray}
	where $Y\in \Gamma\big(f^{*}TN\big)\,.$
\end{lemme}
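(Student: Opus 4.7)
My plan is to recognise $P_f$ as a nonlinear partial differential operator of degree $1$ in $X$ and to invoke Hamilton's framework to obtain smooth tameness; the explicit formula at $X=0$ is then a standard first-variation-of-area computation.

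First, I fix a finite atlas of coordinate charts on $M$. Over a chart with local coordinates $(x^1,\ldots,x^k)$, I write $\mu=\sqrt{\det B}\,dx^1\wedge\cdots\wedge dx^k$ and $f_X^{*}\mu^{f_X}=\sqrt{\det A(X)}\,dx^1\wedge\cdots\wedge dx^k$, where $B_{ij}(x)=g_x(\partial_{x^i},\partial_{x^j})$ is determined by the embedding of $M$ in $N$ and
$$A(X)_{ij}(x)=g_{f_X(x)}\bigl((f_X)_{*}\partial_{x^i},(f_X)_{*}\partial_{x^j}\bigr),$$
so that $P_f(X)=\sqrt{\det A(X)/\det B}$ on the chart. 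Because $f_X(x)=\exp_{f(x)}(X_x)$, the tangent vector $(f_X)_{*}\partial_{x^i}|_{x}$ is obtained from the exponential map applied to $X_x$ and its derivative evaluated on $\partial_{x^i}X(x)$; consequently $A(X)$, and hence $P_f(X)$, is a smooth nonlinear function of the $1$-jet of $X$. By Hamilton's theorem on nonlinear differential operators between sections of smooth vector bundles over a compact base \cite{Hamilton}, $P_f$ is therefore a smooth tame map, and differentiating with respect to $X$ gives the asserted structural form of $(P_f)_{*_X}Y$: linear of order $1$ in $Y$ with coefficients nonlinear of order $1$ in $X$.

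To obtain the formula at $X=0$, I extend $Y\circ f^{-1}$ to a smooth vector field $\tilde Y$ on a neighborhood of $f(M)$ in $N$, let $\phi_t$ be its local flow, and set $F_t:=\phi_t\circ f$. Since $F_t$ and $f_{tY}$ agree to first order in $t$ and $P_f$ depends smoothly on the $1$-jet of its argument, one has $(P_f)_{*_0}Y=\frac{d}{dt}|_{t=0}\bigl(f^{*}\phi_t^{*}\mu^{\phi_t(f(M))}\bigr)/\mu$, which reduces to the first variation of the induced volume form on the deforming submanifold $\phi_t(f(M))$. Using $(\mathcal{L}_{\tilde Y}g)(U,V)=g(\nabla_U\tilde Y,V)+g(U,\nabla_V\tilde Y)$ together with the decomposition $\tilde Y=\tilde Y^{\top}+\tilde Y^{\perp}$ along $f(M)$ and the identity $g(\nabla_U\tilde Y^{\perp},V)=-g(\tilde Y^{\perp},\Pi_f(U,V))$ for $U,V$ tangent to $f(M)$, tracing with the induced metric gives
$$\frac{d}{dt}\bigg|_{t=0}\phi_t^{*}\mu^{\phi_t(f(M))}=\bigl[\textup{div}_f(\tilde Y^{\top})-g(\tilde Y^{\perp},\textup{Tr}\,\Pi_f)\bigr]\mu^{f}.$$
Pulling back by $f$, invoking $f^{*}\mu^{f}=P_f(0)\cdot\mu$, and using $\tilde Y|_{f(M)}=Y\circ f^{-1}$ yields the claimed expression for $(P_f)_{*_0}Y$.

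The main technical obstacle is the accurate invocation of Hamilton's theorem: one must check that the fiberwise map on the $1$-jet bundle defining $P_f$ is a genuine smooth bundle morphism on the open set dictated by $\Theta_f$, which in turn requires the exponential map to be a diffeomorphism there and the matrices $A(X)$ to remain positive definite. Once this is in place, the remaining differential-geometric calculations, namely the first variation of area and the Weingarten identity, are local and standard.
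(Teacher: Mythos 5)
Your proposal is correct, and the first half coincides with the paper's argument: both write $P_{f}(X)=\sqrt{\det A(X)/\det B}$ in a chart, observe that $A(X)$ is a smooth function of the $1$-jet of $X$ through $\exp$, and invoke Hamilton's result that nonlinear differential operators are smooth tame. Where you genuinely diverge is in how the derivative is handled. The paper pushes the coordinate computation all the way: it differentiates $\det A(X_t)^{1/2}$ along a curve of sections, writes out $\partial_{t}A_{ij}$ explicitly with Christoffel symbols, reads off the structural claim (linear of order $1$ in $Y$, coefficients nonlinear of order $1$ in $X$) from that expression, and then recognizes the divergence and $\textup{Tr}\,\Pi_{f}$ terms after splitting $Z=Z^{\top}+Z^{\perp}$, finally setting $X=0$. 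You instead get the structural claim abstractly (the derivative of a first-order jet-bundle morphism is linear in $j^{1}Y$ with coefficients depending on $j^{1}X$), and you get the $X=0$ formula by a coordinate-free route: replace the curve $t\mapsto f_{tY}$ by the flow curve $t\mapsto\phi_{t}\circ f$ of an extension $\tilde Y$ (legitimate, since the two curves have the same velocity at $t=0$ and $\rho$ is differentiable), and then apply the classical first variation of the induced volume form via $\mathcal{L}_{\tilde Y}g$ and the Weingarten identity. Your route is cleaner and makes the geometric meaning of \eqref{expression de la derivée de Pf....} transparent; the paper's explicit local formulas for general $X$ (not just $X=0$) are what it later leans on, at least psychologically, when arguing in Lemma \ref{lemme Q est un diffeo local} that the relevant operator stays elliptic with negative zeroth-order term for small $X$ — though that argument really only needs the structural claim plus continuity in $j^{1}X$, which your version also delivers. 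Both proofs are complete; yours trades explicitness for conceptual economy.
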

\begin{proof}
	Let $\big(U,\phi=(x_{1},...,x_{m})\big)$ be a positively oriented chart for $M$ and 
	$\big(V,\psi=(y_{1},...y_{n})\big)$ a chart for $N$ such that $f_{X}(U)\subseteq V$ for $X$ sufficiently small.

	For $x\in U,$ a direct calculation shows that
 	\begin{eqnarray}\label{expression locale de rho en coordonnées locales}
		\big(P_{f}(X)\big)(x)=\dfrac{\textup{det}\big(g_{f_{X}(x)}((f_{X})_{*_{x}}
		\partial_{x_{i}},(f_{X})_{*_{x}}\partial_{x_{j}})\big)^{1/2}}{\textup{det}
		\big(g_{x}(\partial_{x_{i}},\partial_{x_{j}})\big)^{1/2}}.
	 \end{eqnarray}
	As $(f_{X})_{*_{x}}\partial_{x_{i}}=\textup{exp}_{*}X_{*_{x}}\partial_{x_{i}},$ 
	we see that \eqref{expression locale de rho en coordonnées locales} is a 
	nonlinear differential operator of degree 1 in $X\in \Gamma\big(f^{*}TN\big)$, 
	and it is well known that a nonlinear differential operator is a smooth tame map (see \cite[Cor. 2.2.7]{Hamilton}).

	Let us now compute its derivative in local coordinates. For this purpose, let us take a 
	smooth curve of sections $X_{t}$ in $\varphi_{f}(U_{f}).$ We shall denote 
	$Y_{t}:=\partial_{t}X_{t},$ and $f_{t}:=f_{X_{t}}=\varphi_{f}^{-1}(X_{t})$ the 
	corresponding smooth curve of embeddings in $\textup{Emb}(M,N)$ 
	(in the following, we may forget the subscript ``t''). After elementary differential calculus, one finds, 
	\begin{eqnarray}
		(P_{f})_{*_{X}}Y&=&\partial_{t}\big(P_{f}(X_{t})\big)=
		\dfrac{d}{dt}\bigg\vert_{t}\dfrac{\textup{det}\big(g_{f_{t}(x)}((f_{t})_{*_{x}}\partial_{x_{i}},(f_{t})_{*_{x}}
		\partial_{x_{j}})\big)^{1/2}}{\textup{det}\big(g_{x}(\partial_{x_{i}},
		\partial_{x_{j}})\big)^{1/2}}\nonumber\\
		&=&1/2\cdot P_{f}(X)\cdot\textup{Tr}\,(A^{-1}\partial_{t}A),\label{expression locale avec la trace}
	\end{eqnarray}   
	 where $A$ is the matrix whose entries are $A_{ij}:=g_{ij}^{f_{t}(M)}:=
	g_{f_{t}}\big((f_{t})_{*}\partial_{x_{i}},(f_{t})_{*}\partial_{x_{j}}\big).$
	To carry out the calculation of $\partial_{t}A$ in local coordinates, we will also denote 
	$g^{N}_{ij}:=g(\partial_{y_{i}},\partial_{y_{j}}),$ $Z_{t}:=\partial_{t}f_{t}\in 
	\Gamma(f_{t}^{*}TN)$ and 
	$\Gamma_{\alpha\beta}^{k}\in C^{\infty}(V,\mathbb{R})$ the 
	Christofell symbols associated to the metric $g$ on $V\,.$ Using Einstein 
	summation convention and the formula 
	$\partial_{y_{\alpha}}g_{ab}^{N}=
	\Gamma_{\alpha a}^{k}g_{kb}^{N}+\Gamma_{\alpha b}^{k}g_{ak}^{N},$ it is then easy to see that
	\begin{eqnarray}\label{expression locale penible de la derivées des composantes de la matrice...}
			\partial_{t}A_{ij}&=&\Gamma_{\alpha a}^{k}{\circ} f\cdot g^{N}_{kb}{\circ} 
			f\cdot Z^{\alpha}\cdot \partial_{x_{i}}f^{a}\cdot\partial_{x_{j}}f^{b}\nonumber\\
		&&+\Gamma_{\alpha b}^{k}{\circ} f\cdot g^{N}_{ka}{\circ} 
			f\cdot Z^{\alpha}\cdot \partial_{x_{i}}f^{a}\cdot\partial_{x_{j}}f^{b}\nonumber\\
		&&\text\;\;+g^{N}_{ab}{\circ}f\cdot\partial_{x_{i}}Z^{a}\cdot
			\partial_{x_{j}}f^{b}+g^{N}_{ab}{\circ}f\cdot\partial_{x_{i}}f^{a}\cdot\partial_{x_{j}}Z^{b}.
	\end{eqnarray}
	Since $Z_{t}=\partial_{t}f_{t}=\partial_{t}f_{X_{t}}=\textup{exp}_{*_{X_{t}}}Y_{t}$ and 
	$\partial_{x_{i}}f^{a}=\partial_{x_{i}}f_{X_{t}}^{a}=\textup{exp}_{*}\partial_{x_{i}}X$ 
	can be considered respectively as a partial differential operator of order 0 
	(nonlinear) in $X$ and (linear) in $Y,$ and a nonlinear partial differential operator of order 1 
	in $X,$ it follows easily, in view of \eqref{expression locale avec la trace} and 
	\eqref{expression locale penible de la derivées des composantes de la matrice...}, that 
	$(P_{f})_{*_{X}}Y$ is a family of linear partial differential operators of degree 1 in $Y$ 
	with coefficients which are nonlinear partial differential operators of degree 1 in $X.$

	Formula \eqref{expression de la derivée de Pf....} can be obtained after direct calculations, 
	splitting $Z$ into its tangential and normal parts $Z^{\top},$ $Z^{\bot},$ and using, 
	among others, equation \eqref{expression locale penible de la derivées des composantes de la matrice...}, 
	$\partial_{y_{\alpha}}g_{ab}^{N}=
	\Gamma_{\alpha a}^{k}g_{kb}^{N}+\Gamma_{\alpha b}^{k}g_{ak}^{N}$ as 
	well as $g^{N}_{ab}{\circ}f\cdot(Z^{\bot})^{a}\cdot\partial_{x_{i}}f^{b}=0.$ One finds
	\begin{eqnarray}
		&&(P_{f})_{*_{X}}Y=1/2\cdot P_{f}(X)\cdot\textup{Tr}\,(A^{-1}\partial_{t}A)=P_{f}(X)\cdot\big[\nonumber\\
		%&&-(g^{f(M)})^{ij}{\circ}f\cdot g^{N}_{ab}{\circ}f\cdot %(Z^{\bot})^{a}(\partial^{2}_{x_{i}x_{j}}f^{b}
		%-\partial_{x_{j}}f^{\alpha}\cdot\partial_{x_{i}}f^{\beta}\cdot\Gamma^{b}_{\alpha\beta}{\circ}f)\\
		&&(g^{f(M)})^{ij}{\circ}f\cdot g^{N}_{ab}{\circ}f\cdot 
			\partial_{x_{i}}f^{a}(\partial_{x_{j}}(Z^{\top})^{b}+(Z^{\top})^{\alpha}\cdot 
			\partial_{x_{j}}f^{\beta}\cdot \Gamma_{\alpha\beta}^{b}{\circ}f)\label{equation de la divergence en local}\\
		&&\text{}\;\;-(g^{f(M)})^{ij}{\circ}f\cdot g^{N}_{ab}{\circ}f\cdot (Z^{\bot})^{a}(\partial^{2}_{x_{i}x_{j}}f^{b}
			-\partial_{x_{j}}f^{\alpha}\cdot\partial_{x_{i}}f^{\beta}\cdot
			\Gamma^{b}_{\alpha\beta}{\circ}f)\big].
			\label{equation avec la trace de la seconde forme fondamentale blablabla}
	\end{eqnarray}
	One recognizes \eqref{equation de la divergence en local} as being 
	$\textup{div}_{f}(Z^{\top}{\circ}f^{-1}){\circ}f$ and 
	\eqref{equation avec la trace de la seconde forme fondamentale blablabla} to be 
	the negative of $g_{f}(Z^{\bot}{\circ}f^{-1},\textup{Tr}\,\Pi_{f}).$ 
	Taking $t=0$ and $X_{0}=0,$ then $Z_{0}=\textup{exp}_{*_{0}}Y=Y$ and 
	formula \eqref{expression de la derivée de Pf....} follows.
\end{proof}
\begin{remarque}$\text{}$\label{remarque sur la derivée de rho, la finale j'espere...}
	 \begin{center}
		\begin{description}
			\item[$(i)$] Throughout the last proof, we have actually proved that the 
				``density map'' $\rho\,:\,\textup{Emb}(M,N){\rightarrow}\, 
				C^{\infty}(M,\mathbb{R}),\,f\mapsto f^{*}\mu^{f}/\mu$ 
				is a smooth tame map, and that its derivative at a point $f\in \textup{Emb}(M,N)$ in 
				direction $X\in\Gamma(f^{*}TN),$ is
				\begin{eqnarray}\label{formule derivée de rho}
					\rho_{*_{f}}X=\Big[\textup{div}_{f}(X^{\top}\circ f^{-1})-
					g(X^{\perp}\circ f^{-1},\textup{Tr}\,\Pi_{f})\Big]\circ f\cdot \rho(f).
				\end{eqnarray}
				Equation \eqref{formule derivée de rho} is a classical formula in 
				differential geometry (see for example \cite[p.158]{Jost} or \cite{Molitor-Lagrangian}).\\
			\item[$(ii)$] It may seem weird, in view of 
				\eqref{equation avec la trace de la seconde forme fondamentale blablabla} 
				where there are second order partial differentials, that $(P_{f})_{*_{X}}Y$ 
				is only a nonlinear partial differential operator of order 1 in $X.$ 
				This comes from the ``artificial'' splitting $Z=Z^{\top}+Z^{\bot}$ 
				(recall that $Z=\textup{exp}_{*_{X}}Y,$ see the proof above) 
				which introduces a first order nonlinear partial differential operator
				in $X,$ since projecting a tangent vector on the tangent space of $f_{X}(M)$ 
				consumes the first derivatives of $f_{X}$. 
	\end{description}\end{center}
\end{remarque}

	From Lemma \ref{expression locale de rho}, it follows that 
	$Q_{f}\,:\,\varphi_{f}(U_{f})\rightarrow \Gamma_{f}\big(TN\vert_{f(M)}\big)\oplus C^{\infty}(M,\mathbb{R})$ is a smooth tame map, 
	and one may try to invert it on a neighborhood of the zero section.
\begin{lemme}\label{lemme Q est un diffeo local}
	For $f\in\textup{Emb}_{\mu}(M,N)^{\times},$ the smooth tame map 
	$Q_{f}\,:\,\varphi_{f}(U_{f})\rightarrow \Gamma_{f}\big(TN\vert_{f(M)}\big)\oplus C^{\infty}(M,\mathbb{R})$ is invertible on an open 
	neighborhood of the zero section, and its local inverse is also a smooth tame map.
\end{lemme}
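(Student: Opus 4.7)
The plan is to apply the Nash--Moser inverse function Theorem to $Q_f$ at the origin of $\varphi_f(U_f)$. Smoothness and tameness of $Q_f$ itself follow from Lemma \ref{expression locale de rho} combined with the continuity and linearity of the Helmholtz--Hodge projection $\mathbb{P}_{f(M)}$ recorded in Remark \ref{remarque sur la decomposition, la vraie de H-H}(iii): the first component of $Q_f$ is simply the pull-back of $\mathbb{P}_{f(M)}$, a fixed continuous linear map, while the second is $P_f-1$. Only conditions $(i)$ and $(ii)$ of Nash--Moser remain to be verified on some open neighborhood $V$ of the zero section.

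For $(i)$, linearity of the Helmholtz--Hodge decomposition gives
$dQ_f(X)\{Y\}=(Y_{\mu},(P_{f})_{*_{X}}Y)$
for every $X$ in $\varphi_f(U_f)$. Given a target $(Z,h)\in\Gamma_{\mu}(TN\vert_{f(M)})\oplus C^{\infty}(M,\mathbb{R})$, the only possible preimage is $Y=Z\circ f+(\textup{grad}(q)+q\cdot\textup{Tr}\,\Pi_f)\circ f$ for a function $q\in C^{\infty}(M,\mathbb{R})$, and the equation $(P_{f})_{*_{X}}Y=h$ becomes $L(X)q=h-(P_{f})_{*_{X}}(Z\circ f)$, where
\begin{equation*}
L(X)q:=(P_{f})_{*_{X}}\bigl((\textup{grad}(q)+q\cdot\textup{Tr}\,\Pi_f)\circ f\bigr).
\end{equation*}
At $X=0$, using $P_{f}(0)\equiv1$, $Z\in\Gamma_{\mu}(TN\vert_{f(M)})$ and formula \eqref{expression de la derivée de Pf....}, the operator $L(0)$ reduces (up to pulling back by $f$) to $\triangle-\Vert\textup{Tr}\,\Pi_{f}\Vert^{2}$, which is an isomorphism of $C^{\infty}(M,\mathbb{R})$ by the argument in the proof of Proposition \ref{decomposition de Hel-Hod}. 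By Lemma \ref{expression locale de rho}, $L(X)$ is a smooth family of second-order linear partial differential operators whose coefficients depend smoothly on $X$; since ellipticity, index zero, and triviality of kernel are all stable under small smooth perturbations, $L(X)$ remains an isomorphism on an open neighborhood $V$ of $0$, yielding $(i)$.

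The main obstacle is $(ii)$: the family $(X,v)\mapsto dQ_{f}(X)^{-1}\{v\}$ must be a \emph{smooth tame} map, not merely a pointwise inverse. I plan to invoke Hamilton's theorem on the tame invertibility of smooth families of linear elliptic operators acting on sections of vector bundles (\cite{Hamilton}), applied to the family $L(X)$. Because $L(X)$ is a smooth tame family of elliptic operators, pointwise invertible on $V$, this result produces uniform-in-$X$ Sobolev-type tame estimates for $L(X)^{-1}$ and hence smooth tameness of $X\mapsto L(X)^{-1}$. Recombining with the $X$-independent Helmholtz--Hodge splitting furnishes the required smooth tame inverse of $dQ_{f}(X)$, and the Nash--Moser Theorem then directly provides an open neighborhood of the zero section on which $Q_{f}$ is a smooth tame diffeomorphism onto its image.
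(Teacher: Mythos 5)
Your proposal is correct and follows essentially the same route as the paper: reduce the invertibility of $dQ_f(X)$ to that of the scalar operator $L(X)q=(P_f)_{*_X}\big((\textup{grad}(q)+q\cdot\textup{Tr}\,\Pi_f)\circ f\big)$, observe that $L(0)=\triangle-\Vert\textup{Tr}\,\Pi_f\Vert^{2}$ is an invertible elliptic operator of index zero, propagate invertibility to small $X$, and invoke Hamilton's theorem on smooth tame families of invertible elliptic operators to get condition $(ii)$ of Nash--Moser. The only (harmless) divergence is in how injectivity of $L(X)$ for small $X$ is secured: you appeal to openness of invertibility for index-zero Fredholm families, whereas the paper notes that the zeroth-order coefficient $-\Vert\textup{Tr}\,\Pi_f\Vert^{2}$ stays strictly negative for small $X$ and reapplies the maximum principle.
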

\begin{proof}
	The conditions required by the inverse function theorem of Nash-Moser 
	are that $(Q_{f})_{*_{X}}$ is invertible for all $X$ in a neighborhood of the zero 
	section, and also that the family of inverses forms a smooth tame map (see \cite[Thm 1.1.1]{Hamilton}).\\
	So, let us take $X\in \varphi_{f}(U_{f}),$ $Y\in\Gamma\big(f^{*}TN\big)\,,$ 
	$Z_{\mu}\in \Gamma_{\mu}\big(TN\vert_{f(M)}\big)$ and $p_{Z}\in C^{\infty}(M,\mathbb{R})\,.$\\
	Denoting $Y=(Y_{\mu}+\textup{grad}(p)+p\cdot\textup{Tr}\,\Pi_{f})\circ f,$ we have :
	\begin{eqnarray}
	&&{(Q_{f})}_{*_{X}}Y=(Z_{\mu},p_{Z})\label{equation de la derivée de P au debut de la ligne}\\\
	&\Leftrightarrow &(Y_{\mu},(P_{f})_{*_{X}}Y)=(Z_{\mu},p_{Z})\nonumber\\
	&\Leftrightarrow& Y_{\mu}=Z_{\mu}\,\,\,\,\,\textup{and}\,\,\,\,\,(P_{f})_{*_{X}}Y=p_{z}\nonumber\\
	&\Leftrightarrow& \left \lbrace
	\begin{array}{ccc}\label{equation du systeme avec la derivée de P}
	Y_{\mu}=Z_{\mu},&&\\
	(P_{f})_{*_{X}}(\textup{grad}(p)+p\cdot\textup{Tr}\,\Pi_{f})\circ f=p_{z}-(P_{f})_{*_{X}}(Z_{\mu}).&&
	\end{array}
	\right.
	\end{eqnarray}
	From the equivalence between equation \eqref{equation de la derivée de P au debut de la ligne} 
	and equation \eqref{equation du systeme avec la derivée de P}, we see that $(Q_{f})_{*_{X}}$ is invertible 
	if and only if the operator $(P_{f})_{*_{X}}(\textup{grad}(p)+p\cdot\textup{Tr}\,\Pi_{f})\circ f$ acting on $p$ is invertible.\\
	Now, according to Lemma \ref{expression locale de rho}, $(P_{f})_{*_{X}}(\textup{grad}(p)+p\cdot\textup{Tr}\,\Pi_{f})\circ f$ 
	is a family of linear partial differential operators of degree 2 in $p$ 
	with coefficients which are nonlinear partial differential operators of degree 1 in $X.$ Moreover, as 
	\begin{eqnarray}
		 (P_{f})_{*_{0}}(\textup{grad}(p)+p\cdot\textup{Tr}\,\Pi_{f})\circ 
		f=(\triangle p-\Vert\textup{Tr}\,\Pi_{f}\Vert^{2}\cdot p)\circ f
	\end{eqnarray}
	is an elliptic operator in $p$ with analytical index zero (see the proof of Proposition 
	\ref{decomposition de Hel-Hod}), it follows by the topological invariance of the analytical index 
	that $(P_{f})_{*_{X}}(\textup{grad}(p)+p\cdot\textup{Tr}\,\Pi_{f})\circ f$ is a also elliptic in $p$ with 
	analytical index zero for $X$ sufficiently small. This family is also a family of injective 
	operators by the maximum principle (see \cite[p.96]{Aubin}). The maximum principle can be applied here, 
	because $(P_{f})_{*_{0}}(\textup{grad}(1) +1\cdot\textup{Tr}\,\Pi_{f})\circ f=-\Vert\textup{Tr}\,\Pi_{f}\Vert^{2}\circ f$ 
	is strictly negative (recall that we assume 
	$\textup{Tr}\,\Pi_{f}\neq 0$ for all $x$ in $M$), and thus, the term of order zero 
	$(P_{f})_{*_{X}}(\textup{grad}(1) +1\cdot\textup{Tr}\,\Pi_{f})\circ f$ will remain strictly negative for 
	small $X.$ It follows that this family is actually a family of invertible elliptic operators, and one 
	can apply \cite[Thm 3.3.1]{Hamilton} to deduces that this family of inverses forms a smooth tame family of 
	linear maps. The same conclusion being obviously true for the family of inverses $((Q_{f})_{*_{X}})^{-1},$ the lemma follows.
\end{proof}
	The fact that $\textup{Emb}_{\mu}(M,N)^{\times}$ is a submanifold of 
	$\textup{Emb}(M,N)^{\times}$ is now a simple consequence of Lemma \ref{lemme Q est un diffeo local} as we already remarked. 
%\begin{remarque}
%	It is possible to prove that the set 
%	\begin{eqnarray}
%		 \textup{Emb}_{\mu^{M}}(M,N)':=\textup{Emb}_{\mu^{M}}(M,N)\cap \textup{Emb}(M,N)',
%	\end{eqnarray}
%	where $\text{Emb}(M,N)'$ is the set of embeddings $f$, such that $f(M)$ 
%	is non-minimal, is also a Fr\'echet manifold. The proof is based on a 
%	decomposition of the space $\Gamma_{C^{\infty}}(\Sigma,TN\vert_{\Sigma})$ 
%	which is different from decomposition \eqref{decomposition H-H}. 
%	For a section $X\in \Gamma_{C^{\infty}}(\Sigma,TN\vert_{\Sigma}),$ this decomposition (which is not orthogonal) reads,
%	\begin{eqnarray}
%		 X=X_{\mu}+\lambda_{X}\cdot \textup{Tr}\,\Pi_{\Sigma}+\nabla\phi_{X},
%	\end{eqnarray}
%	where $X_{\mu}\in \Gamma_{\mu^{\Sigma}}(\Sigma,TN\vert_{\Sigma}),$ 
%	$\lambda_{X}\in \mathbb{R}$ and $\phi_{X}\in C^{\infty}(\Sigma,\mathbb{R}).$ 
%	Using this decomposition, one can prove first that the set
%	\begin{eqnarray}
%		\textup{Emb}_{\mathbb{R}\cdot\mu^{M}}(M,N)':=\{f\in \textup{Emb}(M,N)'\,\vert\,\rho(f)\textup{ is constant}\},
%	\end{eqnarray}
%	is a submanifold of $\text{Emb}(M,N)',$ and then that 
%	$\text{Emb}_{\mu^{M}}(M,N)'$ is a submanifold of 
%	$\text{Emb}_{\mathbb{R}\cdot\mu^{M}}(M,N)'.$ The proof heavily relies on 
%	Hamilton's techniques and it is not clear whether 
%	$\text{Emb}_{\mu^{M}}(M,N)'$ is a submanifold of $\text{Emb}(M,N)'.$ 
%	\end{remarque}
%	%\subsection{The case $\partial M\neq\emptyset$}

%\subsection{The geometry of $\textup{Emb}_{\mu^{M}}(M,N)^{\times}$}
%A faire...

\section{Mechanics on the space of volume preserving embeddings} 
\subsection{Euler-Lagrange equations on $\textup{Emb}_{\mu}(M,N)^{\times}$}\label{sss pas inspireeee}
	Let $M$ be a compact, connected and oriented 
	submanifold of a Riemannian manifold $(N,g)\,.$ We denote by $\mu$ the Riemannian volume form on $M$ induced by the ambient metric 
	$g\,.$\\

	In this section, we consider Lagrangian mechanics on $\textup{Emb}_{\mu}(M,N)^{\times}$ for 
	Lagrangians of the following type : 
%	for $L\,:\,TN\rightarrow \mathbb{R}\,,$ define 
%	$\widetilde{L}\,:\,T\textup{Emb}_{\mu}(M,N)^{\times}\rightarrow \mathbb{R}$ as 
	\begin{eqnarray}\label{eee definition lagr}
		 \widetilde{L}(X_{f}):=\int_{M}\,L\circ X_{f}\cdot \mu=\int_{f(M)}\,(L\circ X_{f}\circ f^{-1})\cdot \mu^{f}\,,
	\end{eqnarray}
	where $L\,:\,TN\rightarrow \mathbb{R}$ is a Lagrangian density and where 
	$X_{f}\in T_{f}\textup{Emb}_{\mu}(M,N)^{\times}\cong\Gamma_{\mu}(f^{*}TN)\,.$

	Observe that the last equality in \eqref{eee definition lagr} comes from a change of variables together 
	with the formula $f^{*}\mu^{f}=\mu\,.$\\

	In order to formulate the Euler-Lagrange equations on $\textup{Emb}_{\mu}(M,N)^{\times}$ 
	associated to $\widetilde{L}$, we have to introduce some terminology. Recall that the metric $g$ 
	induces a connector $K\,:\,T(TN)\rightarrow TN$ (see \cite{Lang}, chapter 10, page 284), 
	and that for $v_{x}\in T_{x}N\,,$ there is an isomorphism 
	\begin{eqnarray}
		T_{v_{x}}TN\overset{\cong}{\longrightarrow}T_{x}N\oplus T_{x}N,\,\,\,\xi\mapsto (\pi_{*_{v_{x}}}\xi, K\xi)\,,
	\end{eqnarray}
	where $\pi\,:\,TN\rightarrow N$ is the canonical projection. Hence, for 
	$\xi\in T_{v_{x}}TN\,,$ we have the decomposition $\xi=\xi^{h}+\xi^{v}$ which is characterized by
	$K\xi^{h}=0$ and $\pi_{*_{v_{x}}}\xi^{v}=0\,.$ This decomposition defines a splitting of 
	the bundle $T(TN)$ into a direct sum $T(TN)=HN\oplus VN\,,$ where $HN$ is the horizontal vector 
	bundle and $VN$ the vertical vector bundle (see \cite{Lang}).
	
	With this notation, for $v_{x}\in T_{x}N\,,$ we have :
	\begin{eqnarray}
		L_{*_{v_{x}}}\big\vert_{(HN)_{v_{x}}}\in (HN)_{v_{x}}^{*}\cong T_{x}^{*}N\cong T_{x}N\,.
	\end{eqnarray}
	Thus, there exists $(\nabla^{h}L)_{v_{x}}\in T_{x}N$ such that 
	\begin{eqnarray}
		L_{*_{v_{x}}}\xi^{h}=g\big((\nabla^{h}L)_{v_{x}},\pi_{*_{v_{x}}}\xi^{h}\big)\,,
	\end{eqnarray} 
	for all $\xi^{h}\in (HN)_{v_{x}}\,.$ Similarly, there exists $(\nabla^{v}L)_{v_{x}}\in T_{x}N$ such that
	\begin{eqnarray}
	L_{*_{v_{x}}}\xi^{v}=g\big((\nabla^{v}L)_{v_{x}},K\xi^{v}\big)\,,
	\end{eqnarray}
	for all $\xi^{v}\in (VN)_{v_{x}}\,.$ In this way, we define two maps 
	$\nabla^{h}L\,:\,TN\rightarrow TN$ and $\nabla^{v}L\,:\,TN\rightarrow TN$ which are 
	smooth and fiber preserving. For practical calculations, 
	let $``\,\,\overset{\sharp}{\text{}}\,\,"\,:\,TN\rightarrow T^{*}N$ 
	be the canonical isomorphism induced by the metric $g$ 
	and $``\,\,\overset{\flat}{\text{}}\,\,"\,:\,T^{*}N\rightarrow TN$ its inverse. 
	For $v_{x}\in T_{x}N\,,$ it is not hard to see that :
	\begin{description}
		\item[$\bullet$]  $\big((\nabla^{v}L)_{v_{x}}\big)^{\sharp}=
			\{T_{x}N\ni u_{x}\mapsto \frac{d}{dt}\big\vert_{0}\,L(v_{x}+t\cdot u_{x})\in \mathbb{R}\}\,,$
		\item[$\bullet$]  $\big((\nabla^{h}L)_{v_{x}}\big)^{\sharp}=
			\{T_{x}N\ni u_{x}\mapsto \frac{d}{dt}\big\vert_{0}\,L(U(t)),\,
			\text{where}\,\,\alpha:=\pi\circ U\,\,\,\,\text{is a smooth}\\
			\text{curve in}\,\,N\,\,\text{such that}\,\,\alpha(0)=x\,\,
			\text{and}\,\,\dot{\alpha}(0)=u_{x}\,\,\text{and where}\,\,U\,\,\,\text{is a smooth}\\
			\text{parallel vector field along}\,\,\alpha\,\,\text{such that}\,\,U(0)=v_{x}\}\,.$
	\end{description}
\begin{example}\label{esdnjfdsnfkd}
	If $L:=\frac{1}{2}g(\,.\,,\,.\,)-V\circ\pi\,,$ where $V$ 
	is a function on $N\,,$ then $(\nabla^{v}L)_{v_{x}}=v_{x}$ and 
	$(\nabla^{h}L)_{v_{x}}=-(\textup{grad}(V))_{x}$ for all $v_{x}\in T_{x}N\,.$
\end{example}
\begin{remarque}\label{legendre}
%	\begin{description}
%		\item[$(a)$]
			Using the Legendre transform $\mathbb{F}L\,:\,TN\rightarrow T^{*}N$ 
			(see for example \cite{Abraham-Marsden}), one observes that 
			$(\nabla^{v} L)_{v_{x}}=\big((\mathbb{F}L)(v_{x})\big)^{\flat}\,.$
%		\item[$(b)$] The knowledge of $\nabla^{h} L$ and $\nabla^{v} L$ for a given Lagrangian 
%			$L\,:\,TN\rightarrow \mathbb{R}$ amounts to the knowledge of the 
%			decomposition on the vertical and horizontal vector bundles of the 
%			gradient $\nabla^{TN} L$ of $L$ with respect to the Dombrowski metric $g^{TN}$ on $TN$ (see \cite{Lang}, page 285)\,.
%	\end{description}
\end{remarque}
	Finally, let us introduce, for a given $f\in \textup{Emb}(M,N)^{\times},$ the following operator:
	\begin{eqnarray}
		\mathbb{P}_{f}\,:\,
		 \left \lbrace
		\begin{array}{ccc}
			\Gamma(f^{*}TN)\rightarrow\Gamma_{\mu}(f^{*}TN),&&\\
			%:=\Gamma_{\mu^{M}}(M,f^{*}TN),&&\\
			X\mapsto\mathbb{P}_{f(M)}(X\circ f^{-1})\circ f,&&
	\end{array}
	\right.
	\end{eqnarray}
	(see Remark \ref{remarque sur la decomposition, la vraie de H-H} $(iii)$ for the definition of $\mathbb{P}_{f(M)}).$
\begin{proposition}
	 \label{theoreme ultime??? lol}
	The Euler-Lagrange equations on $\textup{Emb}_{\mu}(M,N)^{\times}$ associated to a Lagrangian density $L\,:\,TN\rightarrow\mathbb{R}$ 
	are :
	\begin{eqnarray}\label{equations d'EULER-lagrange}
		\mathbb{P}_{f}\big[\nabla_{{\partial_{t}}f}\big(\nabla^{v}L\big)_{{\partial_{t}}f}-(\nabla^{h}L)_{{\partial_{t}}f}\big]=0,
	\end{eqnarray}
	where $f=f_{t}$ is a smooth curve in $\textup{Emb}_{\mu}(M,N)^{\times}\,,$ and where $\nabla$ is the Levi-Civita connection associated to 
	$g\,.$ 
\end{proposition}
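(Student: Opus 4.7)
The statement is a first-variation computation combined with Proposition~\ref{decomposition de Hel-Hod}. Fix a smooth path $f=f_t$ in $\textup{Emb}_{\mu}(M,N)^{\times}$ and consider a variation $f_{t,s}$, compactly supported in $t$, with $f_{t,0}=f_t$; write $X=X_t:=\partial_s f_{t,s}\big|_{s=0}$. By Theorem~\ref{Emn variete tame}, the admissible variations are exactly those with $X_t\in\Gamma_\mu(f_t^{*}TN)$, and conversely every such field is realized by some compactly supported admissible variation, using the inverse of the splitting chart $Q_f\circ\varphi_f$ built in Lemma~\ref{lemme Q est un diffeo local}. The Euler-Lagrange equations are therefore the condition that the first variation of $\int \widetilde L(\partial_t f_{t,s})\,dt$ vanishes for every such $X$.

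Next, I would perform the pointwise differentiation. At $(t,x)$, let $\xi:=\partial_s\partial_t f_{t,s}(x)\big|_{s=0}\in T_{\partial_t f(x)}TN$. Then $\pi_{*}\xi=X(t,x)$, and by torsion-freeness of the Levi-Civita connection, $K\xi=\nabla_{\partial_s}\partial_t f_{t,s}\big|_{s=0}=\nabla_{\partial_t}X$, where the right-hand side is the covariant derivative of $t\mapsto X(t,x)$ along $t\mapsto f_t(x)$. The definitions of $\nabla^{h}L$ and $\nabla^{v}L$ then yield
\begin{eqnarray*}
 \tfrac{d}{ds}\Big|_{0} L\big(\partial_t f_{t,s}(x)\big)=g\big((\nabla^{h}L)_{\partial_t f},X\big)+g\big((\nabla^{v}L)_{\partial_t f},\nabla_{\partial_t}X\big).
\end{eqnarray*}
Integrating over the compact boundaryless $M$ and then in $t$, and using metric compatibility of $\nabla$ to integrate the second term by parts in $t$ (the boundary terms vanish by compact support of the variation), I obtain
\begin{eqnarray*}
 \tfrac{d}{ds}\Big|_{0}\!\int \widetilde L(\partial_t f_{t,s})\,dt=\int dt\!\int_{M} g\big((\nabla^{h}L)_{\partial_t f}-\nabla_{\partial_t}(\nabla^{v}L)_{\partial_t f},X\big)\,\mu.
\end{eqnarray*}

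Finally, I would translate the vanishing of this integral, for all admissible $X$, into the projected equation. Using $f^{*}\mu^{f}=\mu$ to rewrite the inner integral as one over $f(M)$ against $\mu^{f}$ (as in the second form of (\ref{eee definition lagr})), the first variation becomes the weak $L^{2}$ pairing on $f(M)$ between $X\circ f^{-1}\in\Gamma_\mu(TN\vert_{f(M)})$ and the field $\nabla_{\partial_t}(\nabla^{v}L)_{\partial_t f}\circ f^{-1}-(\nabla^{h}L)_{\partial_t f}\circ f^{-1}$. By the orthogonality of the decomposition (\ref{decomposition H-H}) supplied by Proposition~\ref{decomposition de Hel-Hod}, the orthogonal complement of $\Gamma_\mu(TN\vert_{f(M)})$ is precisely the kernel of $\mathbb{P}_{f(M)}$, so the vanishing of the pairing for every such $X$ is equivalent to $\mathbb{P}_{f}\big[\nabla_{\partial_t}(\nabla^{v}L)_{\partial_t f}-(\nabla^{h}L)_{\partial_t f}\big]=0$, which is (\ref{equations d'EULER-lagrange}).

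The two points that demand care are the pointwise identity $K(\partial_s\partial_t f)=\nabla_{\partial_t}X$ — a straightforward but easily-bungled rewriting of torsion-freeness in the connector formalism of \cite{Lang} — and the realizability of every infinitesimal $X\in\Gamma_\mu(f^{*}TN)$ as an actual admissible variation in the \emph{nonlinear} constraint set $\textup{Emb}_\mu(M,N)^{\times}$; the latter is exactly what the tame submanifold structure of Theorem~\ref{Emn variete tame} delivers. Without that structural result, the interpretation of (\ref{equations d'EULER-lagrange}) as a genuine Euler-Lagrange equation on a manifold — rather than a formal criticality condition — would be unavailable.
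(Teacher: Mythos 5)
Your proposal is correct and follows essentially the same route as the paper's proof: a first variation computed via the connector splitting of $L_{*}$, the symmetry identity $K\,\partial_{s}\vert_{0}\partial_{t}\tilde{f}=\nabla_{\partial_{t}f}\,\partial_{s}\vert_{0}\tilde{f}$, integration by parts in $t$, and then the orthogonality of the decomposition \eqref{decomposition H-H} to pass to the projected equation. The only difference is that you make explicit the realizability of every $X\in\Gamma_{\mu}(f^{*}TN)$ by an admissible variation via the splitting charts of Lemma \ref{lemme Q est un diffeo local}, a point the paper leaves implicit; this is a welcome clarification rather than a different argument.
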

\begin{proof}
	Let $f_{t}$ be a smooth curve in $\text{Emb}_{\mu}(M,N)^{\times}$ and 
	let $\tilde{f}_{s}$ be a proper variation in $\text{Emb}_{\mu}(M,N)^{\times}$ of 
	the curve $f_{t},$ i.e. a variation with fixed ends 
	(see \cite{Abraham-Marsden}). We have :
	\begin{eqnarray}
		&&\dfrac{d}{ds}\bigg\vert_{0}\,\int_{a}^{b}\,
			\widetilde{L}(\partial_{t}\tilde{f})\,dt=
			\dfrac{d}{ds}\bigg\vert_{0}\,\int_{a}^{b}\int_{M}\,(L\circ\partial_{t}\tilde{f})\cdot \mu\,dt\nonumber\\
		&=&\int_{a}^{b}\int_{M}\,\partial_{s}\vert_{0}\,(L\circ\partial_{t}\tilde{f})\cdot  
			\mu\,dt\label{equation premier terme lsfffffff}
		%&&\text{}\,\,\,\,\,\,+\int_{a}^{b}\int_{M}\,(l\circ\partial_{t}{f})\cdot\partial_{s}\vert_{0}\,( \tilde{f}^{*}\mu^{\tilde{f}(M)})\,dt\,.\label{equation terme du second membre ffg}
%\end{eqnarray}
%The term \eqref{equation terme du second membre ffg} vanishes since $\tilde{f}_{s}$ is a variation in the space of volume preserving embeddings. 
%We will now proceed by a series of calculations starting from \eqref{equation premier terme lsfffffff} and \eqref{equation terme du second membre ffg}\,:
%The term \eqref{equation premier terme lsfffffff} can be rewriten 
%\begin{eqnarray}
%\eqref{equation premier terme lsfffffff}&=&\int_{a}^{b}\int_{ M}\,
%&=&\partial_{s}\vert_{0}\,(l\circ\partial_{t}\tilde{f})\cdot {f}^{*}\mu^{{f}(M)}\,dt\nonumber\\
%\end{eqnarray}
%\begin{eqnarray}
			=\int_{a}^{b}\int_{M}\,(L_{*_{\partial_{t}f}}\partial_{s}\vert_{0}\partial_{t}\tilde{f})\cdot  \mu\,dt\nonumber\\
		&=&\int_{a}^{b}\int_{M}\,\big[g_{f}((\nabla^{h}L)_{\partial_{t}f},\partial_{s}\vert_{0}\tilde{f})
			+g_{f}((\nabla^{v}L)_{\partial_{t}f},{K\,\partial_{s}\vert_{0}\,\partial_{t}\tilde{f}})\big]\cdot  
			\mu\,dt.\;\;\;\;\;\;\text{}\label{equation terme avec le connecteur}
	\end{eqnarray}
	As $K\,\partial_{s}\vert_{0}\,\partial_{t}\tilde{f}=
	\nabla_{\partial_{t}f}\partial_{s}\vert_{0}\tilde{f},$ the second term in
	\eqref{equation terme avec le connecteur} can be rewritten 
	\begin{eqnarray}
		&&\int_{a}^{b}\int_{M}\,g_{f}((\nabla^{v}L)_{\partial_{t}f},
			\nabla_{\partial_{t}f}\,\partial_{s}\vert_{0}\tilde{f})\cdot  \mu\,dt\nonumber\\
		&&=\int_{a}^{b}\int_{M}\,\big[\partial_{t}\,g_{f}((\nabla^{v}L)_{\partial_{t}f},
			\partial_{s}\vert_{0}\tilde{f})-g_{f}(\nabla_{\partial_{t}f}(\nabla^{v}L)_{\partial_{t}f},
			\partial_{s}\vert_{0}\tilde{f})\big]\cdot  \mu\,dt\nonumber\;\;\;\;\;\;\;\;\;\;\text{}\\
%&=&\int_{a}^{b}\int_{M}\,\big[-g_{f}((\nabla^{v}l)_{\partial_{t}f},\partial_{s}\vert_{0}\tilde{f})\cdot\,\underbrace{\partial_{t}\,\Big( \mu^{M}\Big)}_{=0}\nonumber\\
%&&\text{}\:\:\:\:\:\:\:\:\:\:\:\:\:\:\:\:\:\:\:\:-g_{f}(\nabla^{N}_{\partial_{t}f}(\nabla^{v}l)_{\partial_{t}f},\partial_{s}\vert_{0}\tilde{f})\cdot  \mu^{M}\big]\,dt\nonumber\\
		&&=-\int_{a}^{b}\int_{M}\,g_{f}(\nabla_{\partial_{t}f}(\nabla^{v}L)_{\partial_{t}f},\partial_{s}
			\vert_{0}\tilde{f})\cdot  \mu\,dt.\label{equation qui tue sgsg3}
	\end{eqnarray}
	The proposition follows from \eqref{equation terme avec le connecteur}, \eqref{equation qui tue sgsg3} and the fact that 
	\eqref{decomposition H-H} is an orthogonal decomposition. 
\end{proof}
\begin{remarque}
	$\text{}$
	\begin{center}
	\begin{description}
		 \item[$(i)$] If the submanifold $ M\subseteq N$ is a point, 
			then \eqref{equations d'EULER-lagrange} reduces to a coordinate-free 
			formulation of the classical Euler-Lagrange equations on $N\,:$
			\begin{eqnarray}\label{pascal sur msn}
				\nabla_{\dot{\alpha}(t)}\big(\nabla^{v}L\big)_{\dot{\alpha}(t)}-(\nabla^{h}L)_{\dot{\alpha}(t)}=0,
			\end{eqnarray}
			where $\alpha$ is a smooth curve in $N.$ Equation \eqref{pascal sur msn} 
			is a particular case of a more general 
			free-coordinate formulation of the Euler-Lagrange equations using connections (see \cite{Gamboa-Solomin}).\\
		\item[$(ii)$] According to the above remark, the Euler-Lagrange equations 
			$\eqref{equations d'EULER-lagrange}$ on $\textup{Emb}_{\mu}(M,N)^{\times}$ are 
			simply the ``pointwise'' classical Euler-Lagrange equations twisted by the ``Helmholtz-Hodge projector'' $\mathbb{P}_{f}.$
%		\item[$(iii)$] Equation $\eqref{equations d'EULER-lagrange}$ is also true for 
%			the particular case $M=N\,.$ In this case, 
%			$\textup{Emb}_{\mu}(M,N)^{\times}=\textup{SDiff}_{\mu}(M)$ is the group of volume 
%			preserving diffeomorphisms of $M$ with respect to the volume form 
%			$\mu\,,$ and $\mathbb{P}_{f}$ comes from the usual Helmholtz-Hodge decomposition 
%			(see Remark \ref{equation de la decomposition de HH, la vraie, encore}). 
	\end{description}                 
	\end{center}
\end{remarque}
	
	An alternative description of the Euler-Lagrange equations on $\textup{Emb}_{\mu}(M,N)^{\times}\,,$ which is straightforward 
	and maybe more explicit than the one given in Proposition \ref{theoreme ultime??? lol}, is as follows.
\begin{proposition}\label{proposition yoyoyoyoy}
	The Euler-Lagrange equations on $\textup{Emb}_{\mu}(M,N)^{\times}$ associated to a Lagrangian density $L\,:\,TN\rightarrow\mathbb{R}$ 
	are :
		\begin{eqnarray}\label{eee euler lagrange explicites}
			\nabla_{{\partial_{t}}f}\big(\nabla^{v}L\big)_{{\partial_{t}}f}-(\nabla^{h}L)_{{\partial_{t}}f}&=&
			\textup{grad}(p\circ f^{-1})+
			(p\circ f^{-1})\cdot \textup{Tr}\,\Pi_{f}\nonumber\\
			\textup{div}_{f}(\partial_{t}f^{\top})&=&g\big(\partial_{t}f^{\perp},\textup{Tr}\,
			\Pi_{f}\big)\,,
		\end{eqnarray}
	where $f=f_{t}$ is a smooth curve in $\textup{Emb}_{\mu}(M,N)^{\times}\,,$ $p=p_{t}\,:\,M\rightarrow \mathbb{R}$ 
	is a time-dependant function, $\textup{grad}(p\circ f^{-1})$ is the Riemannian gradient of $p\circ f^{-1}$ taken with respect 
	to the induced metric on $f(M)$ and where $\nabla$ is the Levi-Civita connection associated to $g\,.$ 
\end{proposition}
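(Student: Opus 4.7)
The plan is to start from the intrinsic formulation of Proposition \ref{theoreme ultime??? lol} and simply unpack what it means for $\mathbb{P}_{f}$ to annihilate the Euler--Lagrange one-form, combining this with the infinitesimal form of the volume-preservation constraint.

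First I would observe that, by the Helmholtz--Hodge splitting (\ref{decomposition H-H}) together with the very definition of the projector $\mathbb{P}_{f}$ (Remark \ref{remarque sur la decomposition, la vraie de H-H}$(iii)$), a section $Y\in \Gamma(f^{*}TN)$ satisfies $\mathbb{P}_{f}(Y)=0$ if and only if $Y\circ f^{-1}\in\Gamma_{\mu}(TN\vert_{f(M)})^{\bot}$, i.e.\ there exists $q\in C^{\infty}(f(M),\mathbb{R})$ such that $Y\circ f^{-1}=\textup{grad}(q)+q\cdot\textup{Tr}\,\Pi_{f}$. Applying this characterization to $Y:=\nabla_{\partial_{t}f}(\nabla^{v}L)_{\partial_{t}f}-(\nabla^{h}L)_{\partial_{t}f}$ and setting $p:=q\circ f\in C^{\infty}(M,\mathbb{R})$ yields at once the first equation of (\ref{eee euler lagrange explicites}).

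To obtain the second equation, I would exploit the fact that $f_{t}$ is a curve in $\textup{Emb}_{\mu}(M,N)^{\times}$ for all $t$, which is equivalent to $\rho(f_{t})\equiv 1$, where $\rho$ is the density map introduced before Lemma \ref{expression locale de rho}. Differentiating this identity in $t$ and invoking the explicit formula (\ref{formule derivée de rho}) from Remark \ref{remarque sur la derivée de rho, la finale j'espere...}$(i)$ with $X=\partial_{t}f$ gives
$$\bigl[\textup{div}_{f}(\partial_{t}f^{\top}\circ f^{-1})-g(\partial_{t}f^{\perp}\circ f^{-1},\textup{Tr}\,\Pi_{f})\bigr]\circ f\cdot \rho(f)=0,$$
and since $\rho(f)=1$ and $f$ is a diffeomorphism onto $f(M)$, this is precisely the second equation of (\ref{eee euler lagrange explicites}), up to the standard abuse of notation suppressing the composition with $f^{-1}$.

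No substantial obstacle is expected, since the argument is essentially a repackaging of Proposition \ref{theoreme ultime??? lol} via the Helmholtz--Hodge decomposition. The only point deserving minor attention is purely notational: one must verify that the ``Lagrange multiplier'' $p$ appearing in the first equation and the Helmholtz--Hodge complement appearing in the characterization of $\ker \mathbb{P}_{f}$ refer to the same orthogonal splitting (\ref{decomposition H-H}); this is automatic, and the equivalence between the two formulations of the Euler--Lagrange equations is therefore immediate.
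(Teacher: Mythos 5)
Your proposal is correct and follows exactly the route the paper intends: the paper states this proposition without proof, calling it a ``straightforward'' reformulation of Proposition \ref{theoreme ultime??? lol}, and your unpacking --- identifying $\ker\mathbb{P}_{f}$ with $\Gamma_{\mu}(TN\vert_{f(M)})^{\bot}$ via the decomposition \eqref{decomposition H-H} for the first equation, and differentiating $\rho(f_{t})\equiv 1$ via \eqref{formule derivée de rho} (equivalently, noting $\partial_{t}f\in\Gamma_{\mu}(f^{*}TN)$) for the second --- is precisely the argument being left to the reader.
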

\begin{remarque}
	The ``pressure term" $p$ in \eqref{eee euler lagrange explicites} is uniquely determined by the equation 
	\begin{eqnarray}
		\triangle(p\circ f^{-1})-(p\circ f^{-1})\cdot \|\textup{Tr}\,\Pi_{f}\|^{2}&=&
			\textup{div}_{f}\,\Big(\big[\nabla_{{\partial_{t}}f}\big(\nabla^{v}L\big)_{{\partial_{t}}f}
			-(\nabla^{h}L)_{{\partial_{t}}f}\big]^{\top}\Big)\nonumber\\
		&&\text{}\,\,\,\,\,\,-g\Big(\big[\nabla_{{\partial_{t}}f}\big(\nabla^{v}L\big)_{{\partial_{t}}f}
			-(\nabla^{h}L)_{{\partial_{t}}f}\big]^{\perp},\textup{Tr}\,\Pi_{f}\Big)\,,
	\end{eqnarray} 
	where $\triangle$ is the Laplacian operator on $f(M)$ for the induced metric (see the proof of 
	Proposition \ref{decomposition de Hel-Hod}).
\end{remarque}

\subsection{Application: generalization of the Euler equations}\label{eeeeffffgggfggrrrr}
	By ``Euler equations", we are referring to the equations of an incompressible fluid on an oriented Riemannian manifold $(M,g)$
	with Riemannian volume form $\mu\,:$
	\begin{eqnarray}\label{euler equation au macdo}\label{les petites equations}
			\partial_{t}X_{t}+\nabla_{X_{t}}X_{t}&=&\textup{grad}(p_{t})\nonumber\\
			\textup{div}_{\mu}(X_{t})&=&0\,,
		\end{eqnarray}
	where $X_{t}$ is a time dependent vector field describing the velocity of the fluid, $p_{t}\,:\,M\rightarrow\mathbb{R}$ 
	is the pressure of the fluid, $\nabla_{X_{t}}X_{t}$ is the Riemannian covariant derivative of $X_{t}$ in direction $X_{t}$ 
	and where $\textup{grad}(p_{t})$ is the Riemannian gradient of $p_{t}\,.$ 
	The condition $\textup{div}_{\mu}(X_{t})=0$ guaranties that the fluid is incompressible. 
		
	It is known, since Arnold's paper \cite{Arnold}, that the above equations can be interpreted as geodesic 
	equations on the Fr\'{e}chet Lie group $\textup{SDiff}_{\mu}(M)$ for the right invariant $L^{2}$-metric which is defined, at the identity
	diffeomorphism, as $\langle X,Y\rangle:=\int_{M}\,g(X,Y)\cdot\mu\,,$
	where $X,Y\in \mathfrak{X}_{\mu}(M):=\{Z\in \mathfrak{X}(M)\,\big\vert\,
	\textup{div}_{\mu}(Z)=0\}$. Observe that the latter space is identified with the Lie algebra of $\textup{SDiff}_{\mu}(M)\,.$
	
	Briefly, the fact that the Euler equations \eqref{euler equation au macdo} are equivalent to 
	the geodesic equations of $\textup{SDiff}_{\mu}(M)$
	comes from the right-invariance of the metric $\langle\,,\,\rangle$ together with the following general fact: 
	geodesic equations on a Lie group for a right (or left) invariant metric are equivalent to 
	an evolution equation on the Lie algebra called \textit{Euler equation}\footnote{Their are 
	several formulations of the Euler equation. One of them is as follows. If $G$ is a Lie group with Lie algebra $\mathfrak{g}\,,$ 
	endowed with a right or left invariant metric $\langle\,,\rangle\,,$ then its associated Euler equation is $\dot{\alpha}(t)=
	\pm\textup{ad}^{*}(\alpha(t)^{\flat})(\alpha(t))\,,$ 
	where $\textup{ad}^{*}$ is the coadjoint representation of $G\,,$ 
	$\alpha(t)$ is a smooth curve in $\mathfrak{g}^{*}$ and where $\alpha^{\flat}(t)$ is the unique 
	curve in $\mathfrak{g}$ which satisfies $\alpha(t)(\xi)=\langle \alpha^{\flat}(t),\xi\rangle$ for 
	all $\xi\in \mathfrak{g}\,.$ The sign in front of $\textup{ad}^{*}$ depends on convention and whether the metric 
	is right or left invariant. One can show that the Euler equation is equivalent to the geodesic equations for the metric 
	$\langle\,,\rangle\,,$ see \cite{Arnold-Khesin}.}, which, 
	in the particular case $\textup{SDiff}_{\mu}(M)$ yields the Euler equations of an incompressible 
	fluid; this is Arnold's remarkable observation (see \cite{Arnold,Arnold-Khesin,Ebin-Marsden}). 

	For us, the important point is that the Euler equations \eqref{euler equation au macdo} 
	are equivalent to the geodesic equations for the metric $\langle\,,\,\rangle\,,$ and that this metric 
	can be naturally generalized to $\textup{Emb}_{\mu}(M,N)^{\times}\,,$ as follows :
	\begin{eqnarray}\label{definition metric sur emb}
		\langle X_{f},Y_{f}\rangle=\int_{M}\,g(X_{f},Y_{f})\cdot \mu\,,
	\end{eqnarray}
	where $f\in \textup{Emb}_{\mu}(M,N)^{\times}$ and where $X_{f},Y_{f}\in T_{f}\textup{Emb}_{\mu}(M,N)^{\times}\cong 
	\Gamma_{\mu}\big(f^{*}TN\big)\,.$ 

	It is well known in the context of Riemannian geometry that geodesics are solutions of the Euler-Lagrange equations 
	associated to the energy, which in our case reads 
	\begin{eqnarray}
		T\textup{Emb}_{\mu}(M,N)^{\times}\rightarrow \mathbb{R}\,,\,\,\,\,\,\,
		X_{f}\mapsto \dfrac{1}{2}\int_{M}\,g(X_{f},X_{f})\cdot \mu\,.
	\end{eqnarray}
	According to Proposition \ref{proposition yoyoyoyoy}, and taking into account Example \ref{esdnjfdsnfkd}, we thus get
\begin{proposition}\label{je derpime pas de fric}
	The geodesic equations on $\textup{Emb}_{\mu}(M,N)^{\times}$ for the metric $\langle\,,\,\rangle$ 
	defined in \eqref{definition metric sur emb} are
		\begin{eqnarray}\label{dsf fsd fddg}
				\nabla_{\partial_{t}f}\partial_{t}f&=&\textup{grad}(p\circ f^{-1})+(p\circ f^{-1})
				\cdot \textup{Tr}\,\Pi_{f}\nonumber\\
				\textup{div}_{f}(\partial_{t}f^{\top})&=&g\big(\partial_{t}f^{\perp},\textup{Tr}\,
				\Pi_{f}\big)\,,
		\end{eqnarray}
	where $f=f_{t}$ is a smooth curve in $\textup{Emb}_{\mu}(M,N)^{\times}\,,$ and where $p=p_{t}\,:\,M\rightarrow \mathbb{R}$ 
	is a time-dependant function. 
\end{proposition}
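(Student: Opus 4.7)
The plan is to recognize this proposition as a direct specialization of Proposition \ref{proposition yoyoyoyoy}. Geodesics for the weak $L^2$-metric $\langle\,,\,\rangle$ on $\textup{Emb}_{\mu}(M,N)^{\times}$ are, by definition, the critical curves of the energy functional $E(f) := \frac12\int_a^b \langle \partial_t f,\partial_t f\rangle\, dt = \int_a^b \widetilde{L}(\partial_t f)\, dt$ associated to the Lagrangian density $L := \frac12 g(\,\cdot\,,\,\cdot\,)\,:\,TN\to\mathbb{R}$ (i.e.\ the Lagrangian of Example \ref{esdnjfdsnfkd} with $V\equiv 0$). Hence a smooth curve $f_t$ in $\textup{Emb}_{\mu}(M,N)^{\times}$ is a geodesic if and only if it satisfies the Euler-Lagrange equations \eqref{eee euler lagrange explicites} for this particular $L$.

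The second step is to evaluate the horizontal and vertical gradients of $L$ explicitly. By Example \ref{esdnjfdsnfkd}, for every $v_x\in T_xN$ we have
\begin{eqnarray}
	(\nabla^{v}L)_{v_x}=v_x\qquad\textup{and}\qquad (\nabla^{h}L)_{v_x}=0\,.
\end{eqnarray}
Substituting $v_x=\partial_t f$ into \eqref{eee euler lagrange explicites} turns $\nabla_{\partial_t f}(\nabla^{v}L)_{\partial_t f}-(\nabla^{h}L)_{\partial_t f}$ into the plain acceleration $\nabla_{\partial_t f}\partial_t f$, yielding precisely the first line of \eqref{dsf fsd fddg}.

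For the second line, I would simply invoke the fact, built into the manifold structure of Theorem \ref{Emn variete tame}, that $\partial_t f\in T_{f_t}\textup{Emb}_{\mu}(M,N)^{\times}\cong\Gamma_{\mu}(f_t^{*}TN)$. By the very definition of $\Gamma_{\mu}(TN\vert_{f(M)})$, this means that $\partial_t f\circ f^{-1}$ satisfies $\textup{div}_{f}((\partial_t f\circ f^{-1})^{\top})-g((\partial_t f\circ f^{-1})^{\perp},\textup{Tr}\,\Pi_f)=0$, which, rewritten with $f$ rather than $f^{-1}$, is the second equation in \eqref{dsf fsd fddg}. This latter equation is therefore not a dynamical constraint but the infinitesimal volume-preservation condition.

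The only conceptual subtlety, and the step requiring the most care, is the use of the variational principle on the Fréchet manifold $\textup{Emb}_{\mu}(M,N)^{\times}$ equipped with a merely weak Riemannian metric: one must check that Proposition \ref{proposition yoyoyoyoy} is legitimately applied to $L=\frac12 g(\,\cdot\,,\,\cdot\,)$ and that proper variations (fixed endpoints) may be taken inside $\textup{Emb}_{\mu}(M,N)^{\times}$, but both points are handled exactly as in the proof of Proposition \ref{theoreme ultime??? lol}. Once that is granted, the proposition follows with no further computation.
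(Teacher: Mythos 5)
Your proposal is correct and follows exactly the paper's own route: the paper derives this proposition by observing that geodesics are critical points of the energy functional, i.e.\ the Euler--Lagrange equations of Proposition \ref{proposition yoyoyoyoy} for $L=\frac{1}{2}g(\,\cdot\,,\,\cdot\,)$, and then substituting $(\nabla^{v}L)_{v_{x}}=v_{x}$, $(\nabla^{h}L)_{v_{x}}=0$ from Example \ref{esdnjfdsnfkd}. Your additional remark that the second equation is just the tangency (infinitesimal volume-preservation) condition built into $T_{f}\textup{Emb}_{\mu}(M,N)^{\times}\cong\Gamma_{\mu}(f^{*}TN)$ is consistent with the paper, which carries that equation along inside Proposition \ref{proposition yoyoyoyoy}.
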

	In the special case $M=N\,,$ then $\textup{Emb}_{\mu}(M,N)=\textup{SDiff}_{\mu}(M)$ and the term 
	$\textup{Tr}\,\Pi_{f_{t}}$ in \eqref{dsf fsd fddg} vanishes.
	Moreover, If $f=f_{t}$ is a smooth curve in $\textup{Diff}(M)$ and 
	if $X_{t}:=(\partial_{t}f_{t})\circ f_{t}^{-1}\,,$ then one has the formula
	\begin{eqnarray}
		\nabla_{\partial_{t}f}\partial_{t}f=
		(\partial_{t}X_{t}+\nabla_{X_{t}}X_{t})\circ f\,,
	\end{eqnarray} 
	from which, together with $\textup{Tr}\,\Pi_{f_{t}}\equiv 0\,,$ one easily sees that \eqref{dsf fsd fddg} reduces to 
	the usual Euler equations of an incompressible fluid \eqref{les petites equations}. 
	Hence, \eqref{dsf fsd fddg} is indeed a generalization of the Euler equations; we call it 
	the \textit{Euler equations of an incompressible membrane}.

	\text{}\\\\
	\textbf{Acknowledgments.} I would like to thank Tudor Ratiu who brought to my attention 
	the space of volume preserving embeddings during a postdoctoral stay. 
	
	This work was partially supported by the Japan Society for the Promotion of Science.

%\begin{footnotesize}\bibliography{bibala}\end{footnotesize}

\end{document}